\definecolor{lavender}{rgb}{0.5,0,1.0}
\newenvironment{enumerate*}
  {\begin{enumerate}[(I)]
    \setlength{\itemsep}{10pt}
    \setlength{\parskip}{0pt}}
  {\end{enumerate}}
\newtheorem{theorem}{Theorem}[section]
\newtheorem{proposition}[theorem]{Proposition}
\newtheorem{lemma}[theorem]{Lemma}
\theoremstyle{definition}
\newtheorem{definition}[theorem]{Definition}
\newtheorem{example}[theorem]{Example}
\newtheorem{remark}[theorem]{Remark}
\newcommand{\dfn}[1]{\textcolor{blue}{\emph{#1}}}
\newcommand{\SortNoop}[1]{}
\newcommand{\ddeg}{\mathrm{ddeg}}
\newcommand{\dTV}{d_{\mathrm{TV}}}
\newcommand{\tmix}{t^{\mathrm{mix}}}
\newcommand{\width}{\mathrm{width}}
\newcommand{\mc}{\mathcal}
\newcommand{\row}{\mathrm{Row}}
\newcommand{\TT}{\mathbf{T}}
\newcommand{\JJ}{\mathcal{J}}
\newcommand{\MM}{\mathcal{M}}
\newcommand{\HH}{\mathcal{H}}
\newcommand{\KK}{\mathcal{K}}
\newcommand{\bM}{{\bf M}}
\newcommand{\Ind}{\mathrm{Ind}}
\newcommand{\DD}{\mathcal{D}}
\newcommand{\UU}{\mathcal{U}}
\newcommand{\includeSymbol}[1]{\ensuremath{%
	\mathchoice
		{\raisebox{-.7mm}{\includegraphics[height=2.2ex]{#1}}}	
		{\raisebox{-.7mm}{\includegraphics[height=2.2ex]{#1}}}
		{\raisebox{-.6mm}{\includegraphics[height=1.6ex]{#1}}}
		{\raisebox{-.5mm}{\includegraphics[height=1ex]{#1}}}
}}
\newcommand{\hexx}{\includeSymbol{hexx}}
\def\P{\mathbb{P}}
\newcommand{\expect}[1]{\mathbb{E}\left(#1\right)}
\newcommand{\varn}{\mathrm{Var}} 
\definecolor{NormalGreen}{RGB}{0,220,0}
\definecolor{MyPurple}{RGB}{200,0,255}
\begin{document}

\title{Rowmotion Markov Chains}
\subjclass[2010]{}

\author[Colin Defant]{Colin Defant}
\address[]{Department of Mathematics, Massachusetts Institute of Technology, Cambridge, MA 02139, USA}
\email{colindefant@gmail.com}

\author[Rupert Li]{Rupert Li}
\address[]{Massachusetts Institute of Technology, Cambridge, MA 02139, USA}
\email{rupertli@mit.edu}

\author[Evita Nestoridi]{Evita Nestoridi}
\address[]{Mathematics Department, Stony Brook University, Stony Brook NY, 11794-3651, USA}
\email{evrydiki.nestoridi@stonybrook.edu}

\maketitle

\begin{abstract}
\emph{Rowmotion} is a certain well-studied bijective operator on the distributive lattice $J(P)$ of order ideals of a finite poset $P$. We introduce the \emph{rowmotion Markov chain} ${\bf M}_{J(P)}$ by assigning a probability $p_x$ to each $x\in P$ and using these probabilities to insert randomness into the original definition of rowmotion. More generally, we introduce a very broad family of \emph{toggle Markov chains} inspired by Striker's notion of generalized toggling. We characterize when toggle Markov chains are irreducible, and we show that each toggle Markov chain has a remarkably simple stationary distribution. 

We also provide a second generalization of rowmotion Markov chains to the context of semidistrim lattices. Given a semidistrim lattice $L$, we assign a probability $p_j$ to each join-irreducible element $j$ of $L$ and use these probabilities to construct a rowmotion Markov chain ${\bf M}_L$. Under the assumption that each probability $p_j$ is strictly between $0$ and $1$, we prove that ${\bf M}_{L}$ is irreducible. We also compute the stationary distribution of the rowmotion Markov chain of a lattice obtained by adding a minimal element and a maximal element to a disjoint union of two chains.

We bound the mixing time of ${\bf M}_{L}$ for an arbitrary semidistrim lattice $L$. In the special case when $L$ is a Boolean lattice, we use spectral methods to obtain much stronger estimates on the mixing time, showing that rowmotion Markov chains of Boolean lattices exhibit the cutoff phenomenon.   
\end{abstract}

\section{Introduction}\label{sec:intro}

\subsection{Distributive Lattices}\label{subsec:distributive}
Let $P$ be a finite poset, and let $J(P)$ denote the set of order ideals (i.e., down-sets) of $P$. For $S\subseteq P$, let \[\Delta(S)=\{x\in P:x\leq s\text{ for some }s\in S\}\quad\text{and}\quad\nabla(S)=\{x\in P:x\geq s\text{ for some }s\in S\},\] and let $\min(S)$ and $\max(S)$ denote the set of minimal elements and the set of maximal elements of $S$, respectively. \dfn{Rowmotion}, a well-studied operator in the growing field of dynamical algebraic combinatorics, is the bijection $\row\colon J(P)\to J(P)$ defined by\footnote{Many authors define rowmotion to be the inverse of the operator that we have defined. Our definition agrees with the conventions used in \cite{Barnard, BarnardHanson, Semidistrim, ThomasWilliams}.} 
\begin{equation}\label{eq:row_def}
\row(I)=P\setminus \nabla(\max(I)).
\end{equation} 
We refer the reader to \cite{StrikerWilliams,ThomasWilliams} for the history of rowmotion. The purpose of this article is to introduce randomness into the ongoing saga of rowmotion by defining certain Markov chains. We were inspired by the articles \cite{Ayyer, Poznanovic, Rhodes}; these articles define Markov chains based on the \emph{promotion} operator, which is closely related to rowmotion in special cases \cite{Bernstein,StrikerWilliams} (though our Markov chains are fundamentally different from these promotion-based Markov chains). 

For each $x\in P$, fix a probability $p_x\in[0,1]$. We define the \dfn{rowmotion Markov chain} $\bM_{J(P)}$ with state space $J(P)$ as follows. Starting from a state $I\in J(P)$, select a random subset $S$ of $\max(I)$ by adding each element $x\in\max(I)$ into $S$ with probability $p_x$; then transition to the new state $P\setminus\nabla(S)=\row(\Delta(S))$. Thus, for any $I,I'\in J(P)$, the transition probability from $I$ to $I'$ is \[\mathbb P(I\to I')=\begin{cases} \left(\prod\limits_{x\in \min(P\setminus I')}p_x\right)\left(\prod\limits_{x'\in \max(I)\setminus\min(P\setminus I')}(1-p_{x'})\right) & \mbox{if }\min(P\setminus I')\subseteq\max(I); \\ 0 & \mbox{otherwise.}\end{cases}\]
Observe that if $p_x=1$ for all $x\in P$, then $\bM_{J(P)}$ is deterministic and agrees with the rowmotion operator. On the other hand, if $p_x=0$ for all $x\in P$, then $\bM_{J(P)}$ is deterministic and sends every order ideal of $P$ to the order ideal $P$. 

\begin{example}\label{Exam1}
Suppose $P$ is the poset \[\begin{array}{l}\includegraphics[height=.9cm]{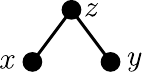}\end{array},\] whose elements $x,y,z$ are as indicated. Then $J(P)$ forms a distributive lattice with $5$ elements. The transition diagram of ${\bf M}_{J(P)}$ is drawn over the Hasse diagram of $J(P)$ in \Cref{Fig1}. 
\end{example}

\begin{figure}[ht]
  \begin{center}{\includegraphics[height=13.391cm]{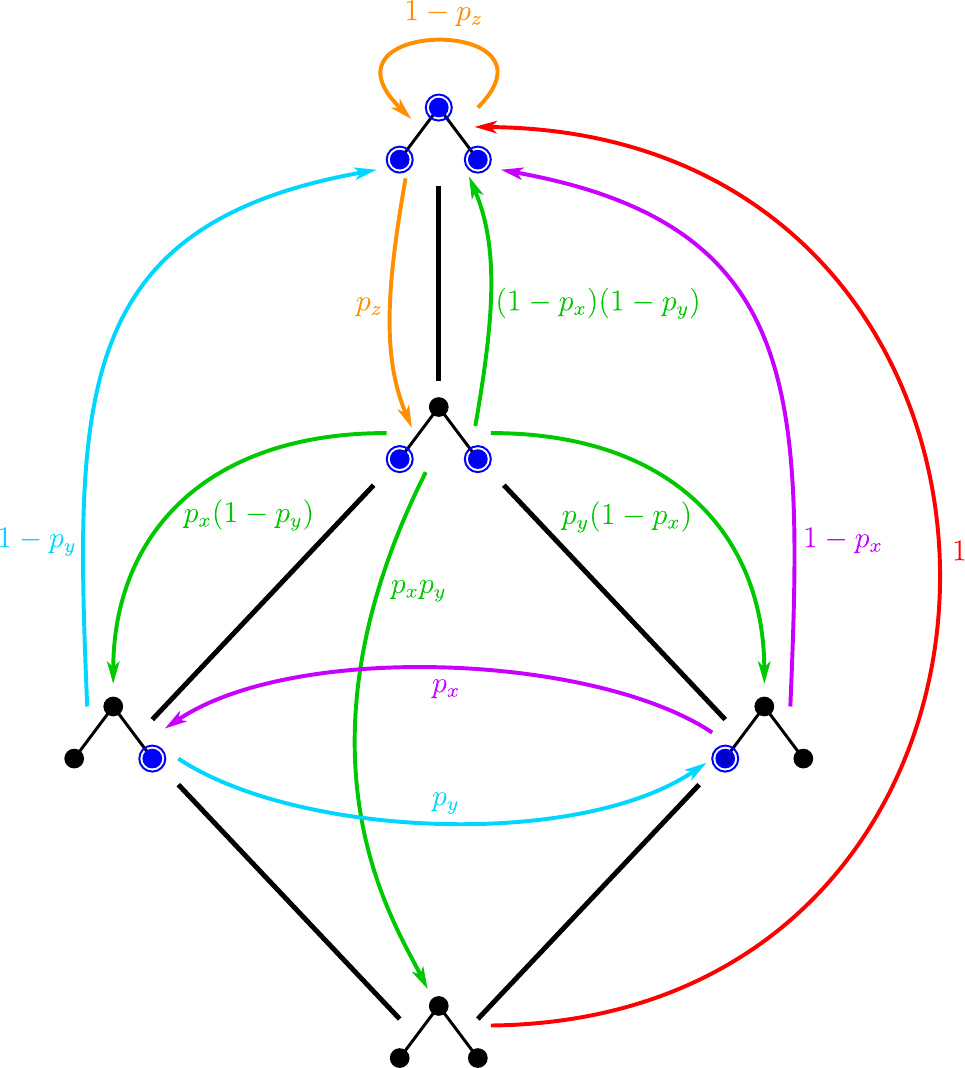}}
  \end{center}
  \caption{The transition diagram of ${\bf M}_{J(P)}$, where $P$ is the $3$-element poset from \Cref{Exam1}. The elements of each order ideal in $J(P)$ are circled and blue. }\label{Fig1}
\end{figure}

Suppose each probability $p_x$ is strictly between 0 and 1. One of our main results will imply that ${\bf M}_{J(P)}$ is irreducible and that the probability of the state $I$ in the stationary distribution of $\bM_{J(P)}$ is \begin{equation}\label{eq:stationary}
\frac{1}{Z(J(P))}\prod_{x\in I}p_x^{-1},
\end{equation} 
where $\displaystyle Z(J(P))=\sum_{I'\in J(P)}\prod_{x'\in I'}p_{x'}^{-1}$.

It is surprising that there is such a clean formula for the stationary distribution in this level of generality. We will deduce this result from a more general result about a vastly broader family of Markov chains. 

\subsection{Toggle Markov Chains}\label{subsec:toggle}

Let $P$ be a finite set of size $n$, and let $\KK$ be a collection of subsets of $P$. For each $x\in P$, define the \dfn{toggle operator} $\tau_x\colon \KK\to \KK$ by \[\tau_x(A)=\begin{cases}
A\triangle\{x\} & \text{ if } A\triangle\{x\}\in \KK \\
A & \text{ otherwise},
\end{cases}\]
where $\triangle$ denotes symmetric difference. Note that $\tau_x$ is an involution. 
Fix a tuple ${\bf x}=(x_1,\ldots,x_n)$ that contains each element of $P$ exactly once. In other words, ${\bf x}$ is an ordering of the elements of $P$. Given a set $Y\subseteq P$, let $\tau_Y=\tau_{y_r}\circ\cdots\circ\tau_{y_1}$, where $y_1,\ldots,y_r$ is the list of elements of $Y$ in the order that they appear within the list $x_1,\ldots,x_n$.

Striker \cite{Striker} viewed the map $\tau_P\colon\KK\to\KK$ as a generalization of rowmotion. Indeed, if $P$ is a poset, ${\bf x}=(x_1,\ldots,x_n)$ is a linear extension of $P$ (meaning $i<j$ whenever $x_i<x_j$ in $P$), and $\KK=J(P)$, then $\tau_P$ is equal to rowmotion. The recent article \cite{Elder} studies the dynamical aspects of $\tau_P$ when $P$ is a poset, ${\bf x}$ is a linear extension of $P$, and $\KK$ is the collection of \emph{interval-closed} (also called \emph{convex}) subsets of $P$. The articles \cite{DJMM, Joseph, JosephRoby} consider $\tau_P$ when $P$ is the vertex set of a particular graph, ${\bf x}$ is a special ordering of the vertices, and $\KK$ is the collection of independent sets of the graph. 

For each $x\in P$, fix a probability $p_x$. Define the \dfn{toggle Markov chain} $\TT=\TT(\KK,{\bf x})$ as follows. The state space of $\TT$ is $\KK$. Suppose the Markov chain is in a state $A\in \KK$. Choose a subset $T\subseteq A$ randomly so that each element $x\in A$ is included in $T$ with probability $p_x$, and then transition from $A$ to the new state $\tau_{T}(A)$. 

To phrase this differently, define the \dfn{random toggle} $\widetilde\tau_x$ to be the stochastic operator that acts as follows on a set $A\in\KK$. Let $X$ be a Bernoulli random variable that takes the value $1$ with probability $p_x$, and let \[\widetilde\tau_x(A)=\begin{cases} \tau_x(A) & \mbox{if }x\not\in A\text{ or }X=1; \\ A & \mbox{if }x\in A\text{ and }X=0.\end{cases}\] Then the Markov chain transitions from the state obtained from $A$ by applying the random toggles $\widetilde\tau_{x_1},\ldots,\widetilde\tau_{x_n}$ in this order. (Each time we apply a random toggle, we use a new Bernoulli random variable that is independent of those used before.) 

\begin{example}\label{Exam2}
Suppose $G$ is the graph \[\begin{array}{l}\includegraphics[height=0.551cm]{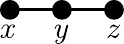}\end{array},\] whose vertices $x,y,z$ are as indicated. Let $\KK$ be the collection of independent sets of $G$. \Cref{Fig4} depicts the random toggles $\widetilde\tau_x,\widetilde\tau_y,\widetilde\tau_z$. If we let ${\bf x}=(x,y,z)$, then a transition of ${\bf T}(\KK,{\bf x})$ consists of applying these random toggles in the order $\widetilde\tau_x,\widetilde\tau_y,\widetilde\tau_z$. 
\end{example}

\begin{figure}[ht]
  \begin{center}{\includegraphics[height=4.818cm]{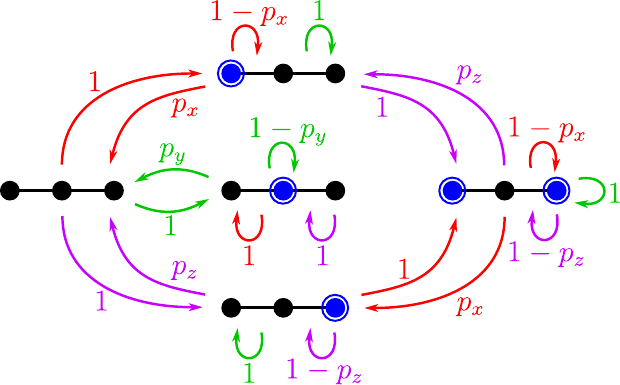}}
  \end{center}
  \caption{As in \cref{Exam2}, we consider random toggles, where $\KK$ is the collection of independent sets of a path graph with vertices $x,y,z$ (from left to right). The elements of each independent set are circled and blue. To apply the random toggle $\widetilde\tau_x$ to an independent set $A$, we follow one of the {\color{red}red} arrows starting at $A$; the probability that a particular arrow is used is written next to the arrow. Similarly, we follow a {\color{NormalGreen}green} arrow when we apply $\widetilde\tau_y$, and we follow a {\color{MyPurple}purple} arrow when we apply $\widetilde\tau_z$. }\label{Fig4}
\end{figure}

Given a set $P$, let $\HH^P$ be the hypercube graph with vertex set $2^P$ (the power set of $P$) such that two sets $A,A'\subseteq P$ are adjacent if and only if $|A\triangle A'|=1$. For $S\subseteq 2^P$, let $\HH^P\vert_S$ be the induced subgraph of $\HH^P$ with vertex set $S$.

Let us now state our main results about irreducibility and stationary distributions of toggle Markov chains. As before, we fix a finite set $P$, a collection $\KK$ of subsets of $P$, an ordering ${\bf x}$ of the elements of $P$, and a probability $p_x$ for each $x\in P$.

\begin{theorem}\label{thm:toggle_irreducible}
Suppose $0<p_x<1$ for each $x\in P$. The toggle Markov chain $\TT(\KK,{\bf x})$ is irreducible if and only if the graph $\HH^P\vert_{\KK}$ is connected. 
\end{theorem}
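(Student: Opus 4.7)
The plan is to prove each direction of the biconditional separately, with the forward direction being essentially immediate and the reverse direction requiring a careful combinatorial construction.

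For the forward direction ($\Rightarrow$), I will argue its contrapositive: if $\HH^P\vert_\KK$ is disconnected, then $\TT(\KK,{\bf x})$ is reducible. The key observation is that each random toggle $\widetilde\tau_{x_i}$ either leaves a state $A$ unchanged or transitions it to $A \triangle \{x_i\}$, which is adjacent to $A$ in $\HH^P\vert_\KK$. Consequently, the intermediate states $A = A^{(0)}, A^{(1)}, \ldots, A^{(n)}$ that arise during any single transition of the chain all lie within a single connected component of $\HH^P\vert_\KK$. Iterating across transitions shows the chain remains within one connected component, so disconnectedness prevents states in different components from being mutually reachable.

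For the reverse direction ($\Leftarrow$), I assume $\HH^P\vert_\KK$ is connected and must show $\TT$ is irreducible. Because $0 < p_x < 1$ for each $x \in P$, any specific sequence of coin-flip outcomes for the random toggles, across any fixed finite number of transitions, has strictly positive probability. Thus it suffices to exhibit, for any $A, B \in \KK$, a finite sequence of coin-flip outcomes whose resulting deterministic trajectory carries the chain from $A$ to $B$. By connectedness, I can fix a path $A = A_0, A_1, \ldots, A_\ell = B$ in $\HH^P\vert_\KK$, and then by induction on $\ell$ it suffices to treat the base case where $B = A \triangle \{y\}$ for some $y \in P$.

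In this base case, I plan to construct an explicit finite sequence of transitions, each with carefully chosen coin flips, starting at $A$ and ending at $A \triangle \{y\}$. The main obstacle will be that a transition of $\TT$ applies random toggles to all $n$ elements of $P$ in order, not just to $y$; moreover, whenever a random toggle $\widetilde\tau_{x_i}$ is encountered with $x_i \notin A^{(i-1)}$, it unconditionally applies $\tau_{x_i}$, attempting to add $x_i$ regardless of any coin flip. These forced toggles cannot be suppressed and may introduce unwanted elements into the state mid-transition that cannot be removed before the transition ends. My strategy to overcome this is to use multiple transitions in succession: making progress toward $A \triangle \{y\}$ via appropriate coin-flip choices, then exploiting connectedness of $\HH^P\vert_\KK$ to find subsequent transitions that remove the unwanted elements introduced by forced additions. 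The careful combinatorial bookkeeping that assembles these corrections into a single deterministic trajectory from $A$ to $B$ is the technical heart of the argument.
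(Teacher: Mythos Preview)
Your forward direction matches the paper's argument exactly.

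Your reverse direction, however, has a circularity you have not resolved. You propose to reach $B = A \triangle \{y\}$ from $A$ by first making a transition that introduces some unwanted elements via forced additions, and then ``exploiting connectedness of $\HH^P\vert_\KK$ to find subsequent transitions that remove the unwanted elements.'' But removing an unwanted element $z$ means passing from some state $C$ to the adjacent state $C \triangle \{z\}$ via the Markov chain---which is precisely the claim you are trying to establish. Connectedness of the graph $\HH^P\vert_\KK$ does not by itself furnish Markov-chain transitions; that is the content of the theorem. Without a well-founded measure that strictly decreases after each correction step, your cleanup phase begs the question, and you have not identified any such measure. The sentence ``the careful combinatorial bookkeeping \ldots\ is the technical heart of the argument'' names the difficulty but does not address it.

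The paper sidesteps this circularity with a different induction: on $|P|$ rather than on path length in $\HH^P\vert_\KK$. For two adjacent \emph{nonempty} sets $A,A'$, one picks $z \in A \cap A'$, restricts to the connected component $\KK'$ of $\{S \in \KK : z \in S\}$ containing $A$ and $A'$, and deletes $z$ from every set to obtain a toggle Markov chain on the ground set $P \setminus \{z\}$ of size $n-1$. By induction that smaller chain is irreducible, and each of its transitions lifts back to a transition of $\TT(\KK,\mathbf{x})$ by always choosing the ``do not toggle'' coin flip at $z$ (which is available since $z$ remains in the state throughout). This handles all edges of $\HH^P\vert_{\KK\setminus\{\emptyset\}}$; the case $\emptyset \in \KK$ is then treated by a separate argument linking $\emptyset$ to the singleton sets via the deterministic map $\tau_P$. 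The crucial point is that the induction on $|P|$ reduces to a genuinely smaller instance of the \emph{same} problem, whereas your proposed cleanup phase does not.
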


If $P$ is a finite poset, then every connected component of $\HH^P_{J(P)}$ contains the empty set as a vertex. Thus, it is immediate from \Cref{thm:toggle_irreducible} that the rowmotion Markov chain ${\bf M}_{J(P)}$ is irreducible whenever $0<p_x<1$ for every $x\in P$. 

\begin{theorem}\label{thm:toggle_stationary}
Suppose that the toggle Markov chain $\TT(\KK,{\bf x})$ is irreducible and that $p_x>0$ for every $x\in P$. For $A\in \KK$, the probability of the state $A$ in the stationary distribution of $\TT(\KK,{\bf x})$ is \[\frac{1}{Z(\KK)}\prod_{x\in A}p_x^{-1},\] where $\displaystyle Z(\KK)=\sum_{A'\in \KK}\prod_{x'\in A'}p_{x'}^{-1}$.
\end{theorem}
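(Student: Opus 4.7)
The plan is to prove that the distribution $\pi$ defined by $\pi(A) = \frac{1}{Z(\KK)}\prod_{x \in A} p_x^{-1}$ is stationary for each individual random toggle $\widetilde\tau_x$ (not just for the full composition), and then use the fact that one step of $\TT(\KK,\mathbf{x})$ is the composition $\widetilde\tau_{x_n} \circ \cdots \circ \widetilde\tau_{x_1}$ to conclude that $\pi$ is stationary for the chain. Irreducibility (which is assumed) then pins $\pi$ down as the unique stationary distribution.

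To show $\pi$ is stationary for $\widetilde\tau_x$, I will verify the detailed balance equations
\[
\pi(A)\,\mathbb{P}\bigl(\widetilde\tau_x(A) = B\bigr) = \pi(B)\,\mathbb{P}\bigl(\widetilde\tau_x(B) = A\bigr)
\]
for all pairs $A, B \in \KK$, which imply stationarity. These are automatic when $A = B$ or when $A \triangle B \neq \{x\}$ (both sides vanish in the latter case by the definition of $\widetilde\tau_x$), so the only case to check is $B = A \triangle \{x\} \in \KK$. Assuming without loss of generality that $x \notin A$ and $x \in B$, the definition of $\widetilde\tau_x$ gives $\mathbb{P}(\widetilde\tau_x(A) = B) = 1$ and $\mathbb{P}(\widetilde\tau_x(B) = A) = p_x$, while the defining formula gives $\pi(B) = p_x^{-1}\pi(A)$; both sides therefore equal $\pi(A)$. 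Note that this case analysis silently handles the situation where $A \triangle \{x\} \notin \KK$: in that scenario $\widetilde\tau_x$ fixes $A$ with probability $1$, so $A$ participates in no nontrivial transitions under $\widetilde\tau_x$ and detailed balance holds vacuously.

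Having established stationarity of $\pi$ under each $\widetilde\tau_x$, the composition rule $\pi P_{x_1} P_{x_2} \cdots P_{x_n} = \pi$ (where $P_x$ is the transition matrix of $\widetilde\tau_x$) gives stationarity of $\pi$ for one step of $\TT(\KK,\mathbf{x})$. Because $\TT(\KK,\mathbf{x})$ is irreducible and has finite state space $\KK$, its stationary distribution is unique, so it must equal $\pi$.

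The only conceptual subtlety—and the place where the reader might wish for a remark—is that although each individual random toggle $\widetilde\tau_x$ is reversible with respect to $\pi$, the composition $\widetilde\tau_{x_n} \circ \cdots \circ \widetilde\tau_{x_1}$ is typically \emph{not} reversible; reversibility is used only as a convenient route to verifying that each random toggle preserves $\pi$, after which stationarity (not detailed balance) of the composition is all that is needed.
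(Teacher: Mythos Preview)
Your proof is correct and takes a genuinely different route from the paper's. The paper verifies the global balance equation $\sum_{A'\in\KK}\mathbb{P}(A'\to A)\,\mu(A')=\mu(A)$ for the full chain directly: it sets up a bijection $\gamma(U)=\tau_{P\setminus U}^{-1}(A)$ from subsets $U\subseteq A$ to the states $A'$ with $\mathbb{P}(A'\to A)>0$, observes $\gamma(U)\cap A=U$, and then sums the contributions to obtain $\prod_{u\in A}\bigl(\frac{1-p_u}{p_u}+1\bigr)=\mu(A)$. Your argument instead factors one step of the chain as the independent composition of the random toggles $\widetilde\tau_{x_1},\ldots,\widetilde\tau_{x_n}$ and checks detailed balance for each factor separately, which is a two-line computation. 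What your approach buys is a transparent explanation of why the stationary distribution is independent of the ordering $\mathbf{x}$ (a fact the paper notes but does not explain): since every $\widetilde\tau_x$ individually preserves $\pi$, any composition in any order does too. It also exposes the stronger fact that each random toggle is reversible with respect to $\pi$. The paper's direct computation, on the other hand, works entirely at the level of the composed chain and yields an explicit parametrization of the predecessors of a given state, which is informative in its own right.
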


Note that the stationary distribution in \Cref{thm:toggle_stationary} is independent of the ordering ${\bf x}$ (though the Markov chain itself can certainly depend on ${\bf x}$).

\subsection{Mixing Times}

Suppose ${\bf M}$ is an irreducible finite Markov chain with state space $\Omega$, transition matrix $Q$, and stationary distribution $\pi$. For $x\in\Omega$, let $Q^i(x,\cdot)$ denote the distribution on $\Omega$ in which the probability of a state $x'$ is the probability of reaching $x'$ by starting at $x$ and applying $i$ transitions (this probability is the entry in $Q^i$ in the row indexed by $x$ and the column indexed by $x'$).
In other words, $Q^i(x,\cdot)$ is the law on $\Omega$ after $i$ steps of the Markov chain, starting at $x$.
The \dfn{total variation distance} $\dTV=\dTV^{\Omega}$ is the metric on the space of distributions on $\Omega$ defined by
\[ \dTV(\mu,\nu)=\max_{A\subseteq \Omega} |\mu(A)-\nu(A)| = \frac{1}{2}\sum_{x\in\Omega} |\mu(x)-\nu(x)|. \]
For $\varepsilon>0$, the \dfn{mixing time} of $\bM$, denoted $\tmix_{\bM}(\varepsilon)$, is
the smallest nonnegative integer $i$ such that $\dTV(Q^i(x,\cdot),\pi)<\varepsilon$ for all $x\in\Omega$. 

The \dfn{width} of a finite poset $P$, denoted $\width(P)$, is the maximum size of an antichain in $P$. In \Cref{sec:mixing}, we use the method of coupling to prove the following bound on the mixing time of an arbitrary rowmotion Markov chain. 

\begin{theorem}\label{thm:general_mixing}
Let $P$ be a finite poset, and fix a probability $p_x\in(0,1)$ for each $x\in P$. Let $\overline p=\max\limits_{x\in P}p_x$. For each $\varepsilon>0$, the mixing time of $\bM_{J(P)}$ satisfies \[\tmix_{\bM_{J(P)}}(\varepsilon)\leq\left\lceil\frac{\log\varepsilon}{\log\left(1-\left(1-\overline{p}\right)^{\width(P)}\right)}\right\rceil.\]
\end{theorem}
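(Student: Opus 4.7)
The plan is to prove the bound by exhibiting a Markovian coupling whose coalescence time has a geometric tail with parameter $\delta := (1-\overline{p})^{\width(P)}$. The starting observation is that from any state $I \in J(P)$, the rowmotion Markov chain jumps to the full ideal $P$ whenever the random subset $S \subseteq \max(I)$ happens to be empty, because then $P\setminus\nabla(S)=P$. The probability of this event from $I$ is
\[
\delta_I := \prod_{x \in \max(I)}(1-p_x),
\]
and since $\max(I)$ is an antichain of $P$ we have $|\max(I)| \leq \width(P)$, so $\delta_I \geq (1-\overline{p})^{\width(P)} = \delta$. In other words, every state has probability at least $\delta$ of landing at $P$ in a single step, and $P$ itself will serve as the universal coalescence target.

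To convert this into a mixing-time estimate, I would construct a Markovian coupling $(I^{(1)}_t, I^{(2)}_t)_{t \geq 0}$ of two copies of $\bM_{J(P)}$ starting from any two initial states. At each step, with probability $\min(\delta_{I^{(1)}_t}, \delta_{I^{(2)}_t})$ send both chains simultaneously to $P$; in the residual cases, complete the transitions using independent randomness drawn from the two chains' respective conditional laws (conditioned on $S \neq \emptyset$), so as to preserve the correct rowmotion marginals on each coordinate. Once both chains are at $P$ at the same time we let them evolve thereafter with identical randomness, so coalescence is permanent. Letting $\tau$ denote the coalescence time, the construction guarantees that at each step before coalescence, the conditional probability of coalescing on that step is at least $\delta$, and hence $\pr{\tau > t} \leq (1-\delta)^t$.

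The bound on $\tmix$ then follows from the standard coupling lemma: for any initial states, $\dTV(Q^t(I^{(1)}_0,\cdot), Q^t(I^{(2)}_0,\cdot)) \leq \pr{\tau > t} \leq (1-\delta)^t$. Averaging the second coordinate against the stationary distribution $\pi$ of $\bM_{J(P)}$ (whose existence and uniqueness come from \Cref{thm:toggle_stationary} applied to $\KK = J(P)$, together with the irreducibility remark following \Cref{thm:toggle_irreducible}) and using convexity of $\dTV$ and stationarity of $\pi$, we obtain $\dTV(Q^t(I,\cdot), \pi) \leq (1-\delta)^t$ for every $I$. Solving $(1-\delta)^t < \varepsilon$ and noting that $\log(1-\delta) < 0$ flips the inequality upon taking logarithms yields the stated ceiling bound.

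There is no serious obstacle here; the delicate point is the bookkeeping of the joint one-step transition, so that both coordinates have the correct rowmotion marginals while simultaneously being forced to $P$ with probability $\min(\delta_{I^{(1)}_t}, \delta_{I^{(2)}_t})$. The key structural input that gives the bound its sharpness is that $\max(I)$ is always an antichain, which replaces the crude lower bound $(1-\overline{p})^{|P|}$ by $(1-\overline{p})^{\width(P)}$ and allows a single global $\delta$ to work from every state.
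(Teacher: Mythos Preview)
Your proposal is correct and follows essentially the same coupling argument as the paper: both observe that from any state $I$ the chain moves to $P$ with probability $\prod_{x\in\max(I)}(1-p_x)\geq(1-\overline p)^{\width(P)}$, and both use a Markovian coupling that forces simultaneous jumps to $P$ with at least this probability, yielding a geometric tail on the coalescence time. The only cosmetic difference is that the paper couples directly with $Y_0\sim\pi$, whereas you couple two arbitrary starting states and then average the second coordinate against $\pi$; these are equivalent.
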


We can drastically improve the bound in \Cref{thm:general_mixing} when $P$ is an antichain (so $J(P)$ is a Boolean lattice). For simplicity, we assume that all probabilities $p_x$ are equal to a single value $p$. In this setting, the Markov chain is reversible with respect to $\pi$; this allows us to give a spectral proof of the following result, which is an instance of the well-studied \emph{cutoff phenomenon}. (See \cite[Chapter~18]{Levin} for a discussion of cutoff.) 

\begin{theorem}\label{cutoff}
Let $P$ be an $n$-element antichain, and fix a probability $p\in(0,1)$. Let $p_x=p$ for all $x\in P$. Let $Q$ and $\pi$ be the transition matrix and stationary distribution, respectively, of the Markov chain ${\bf M}_{J(P)}$.  
\begin{enumerate}
    \item 
For $c>\frac{1}{2}$ and $t=\frac{1}{2} \log_{1/p}n + c$, we have
\[\max_{x\in J(P)}\dTV(Q^t(x,\cdot),\pi)\leq \frac{1}{2} \left( e^{p^{2c-1}} -1 \right)^{1/2}.\]
\item For $0<c<\frac{1}{2}\log_{1/p}n$ and $t=\frac{1}{2} \log_{1/p}n -c$, we have
\[\max_{x\in J(P)}\dTV(Q^t(x,\cdot),\pi) \geq 1- 4 p^{2c+1}-4 p^{2c}.\]
\end{enumerate}
\end{theorem}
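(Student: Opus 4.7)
The plan is to exploit the fact that when $P$ is an $n$-element antichain, the Markov chain $\bM_{J(P)}$ factors as a product of $n$ independent two-state chains. Since $\max(I)=I$ and $\nabla(S)=S$ for all $I,S\subseteq P$, a transition from $I$ includes each element $x\in I$ in $S$ independently with probability $p$ and then moves to $P\setminus S$, so for every $x\in P$ the indicator $X_t(x)=\mathbf{1}[x\in I_t]$ evolves independently as a two-state chain with transition matrix $\left(\begin{smallmatrix}0&1\\p&1-p\end{smallmatrix}\right)$ (rows/columns indexed $0,1$). This $2\times 2$ matrix has eigenvalues $1,-p$, stationary distribution $(p/(1+p),\,1/(1+p))$ on $(0,1)$, and a nontrivial $L^2$-orthonormal eigenvector $f$ with $f(0)=1/\sqrt p$ and $f(1)=-\sqrt p$.

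For part~(1), I would bound the $\chi^2$-distance. The product structure together with the single-coordinate identity $\chi^2=p^{2t}f(x_0)^2$ gives
\[
1+\chi^2\bigl(Q^t(I,\cdot)\,\big\|\,\pi\bigr)=\prod_{x\in P}\bigl(1+p^{2t}f(X_0(x))^2\bigr).
\]
Because $f(0)^2=1/p$ and $f(1)^2=p$, this product is maximized at $I=\emptyset$, yielding $\chi^2\leq(1+p^{2t-1})^n-1\leq e^{np^{2t-1}}-1$. Substituting $t=\tfrac12\log_{1/p}n+c$ gives $np^{2t-1}=p^{2c-1}$, and the Cauchy--Schwarz bound $\dTV\leq\tfrac12\sqrt{\chi^2}$ yields part~(1).

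For part~(2), I would test against the linear statistic $F(I)=\sum_{x\in P}f(\mathbf{1}[x\in I])$. Under $\pi$ its summands are i.i.d.\ with mean $0$ and variance $1$, so $\mathbb{E}_\pi F=0$ and $\mathrm{Var}_\pi F=n$. Starting from $I_0=\emptyset$, the independence of the coordinate chains and the eigenvalue equation $\mathbb{E}[f(X_t(x))\mid X_0(x)=0]=(-p)^tf(0)$ give $\mathbb{E}_\emptyset F(I_t)=n(-p)^t/\sqrt p$ together with $\mathrm{Var}_\emptyset F(I_t)\leq n/p$ (since $f(X)^2\leq 1/p$). Defining $A=\{I:(-1)^tF(I)\geq\tfrac12 np^{t-1/2}\}$ and invoking Chebyshev's inequality on both sides, one obtains $\pi(A)\leq 4p^{2c+1}$ and $Q^t(\emptyset,A)\geq 1-4p^{2c}$ when $t=\tfrac12\log_{1/p}n-c$; subtracting gives the desired $\dTV\geq 1-4p^{2c+1}-4p^{2c}$.

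The crux of the proof is the initial observation that the chain factors into independent two-state chains; once this is in hand, the cutoff follows a familiar template resembling the Ehrenfest-urn analysis. The main technical care lies in tracking the sign of $(-p)^t$ in the lower-bound mean (hence the factor $(-1)^t$ in the definition of $A$) and in verifying that $\emptyset$ is simultaneously the extremal starting point for both bounds.
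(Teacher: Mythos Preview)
Your proposal is correct, and the underlying objects coincide with the paper's: your tensor eigenfunctions $\bigotimes_{x\in I}f$ are precisely the paper's $f_I(A)=p^{-|I|/2}(-p)^{|I\cap A|}$, and your linear statistic $F(I)=\sum_x f(\mathbf 1[x\in I])$ equals $\sqrt{n/p}\,h(|I|)$, where $h$ is the paper's test function. The routes differ in how these are obtained and used. The paper verifies the eigenvector identity and orthonormality by direct computation on $2^P$, and for the lower bound it tracks $\mathbb E[f(X_{t+1})\mid X_t]$ and $\mathbb E[g(X_{t+1})\mid X_t]$ for an auxiliary quadratic $g$, then combines them via the polynomial identity $f^2=-\tfrac{1+p}{n}g+\tfrac{1-p}{n}f+\tfrac p n$ to get the exact variance of $h(X_t)$. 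Your product-chain observation replaces all of this with one-line arguments: the $\chi^2$ tensorization $1+\chi^2=\prod_x(1+p^{2t}f(x_0)^2)$ gives the upper bound immediately and identifies $\emptyset$ as the worst start, while independence of coordinates gives $\mathrm{Var}_\emptyset F(I_t)=n\,\mathrm{Var}(f(X_t(x)))\le n/p$ without any recursion. The paper's exact variance is sharper (it is $\le n$ rather than $n/p$), but your cruder bound already yields the stated constant $4p^{2c}$, so nothing is lost. In short, your argument is the same spectral/Wilson-type proof recast through the product structure; it is shorter and more transparent, at the cost of a slightly looser (but adequate) variance estimate in the lower bound.
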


It would be interesting to prove that other natural families of toggle Markov chains exhibit cutoff.

\subsection{Semidistrim Lattices}
If $P$ is a finite poset, then we can order $J(P)$ by inclusion to obtain a distributive lattice. In fact, Birkhoff's Fundamental Theorem of Finite Distributive Lattices \cite{Birkhoff} states that every finite distributive lattice is isomorphic to the lattice of order ideals of some finite poset. Thus, instead of viewing rowmotion as a bijective operator on the set of order ideals of a finite poset, one can equivalently view it as a bijective operator on the set of \emph{elements} of a distributive lattice. This perspective has led to more general definitions of rowmotion in recent years. Barnard \cite{Barnard} showed how to extend the definition of rowmotion to the broader family of \emph{semidistributive} lattices, while Thomas and Williams \cite{ThomasWilliams} discussed how to extend the definition to the family of \emph{trim} lattices. (Every distributive lattice is semidistributive and trim, but there are semidistributive lattices that are not trim and trim lattices that are not semidistributive.) 

One notable example motivating these extended definitions comes from Reading's \emph{Cambrian lattices} \cite{ReadingCambrian}. Suppose $c$ is a Coxeter element of a finite Coxeter group $W$. Reading \cite{ReadingClusters} found a bijection from the $c$-Cambrian lattice to the $c$-noncrossing partition lattice of $W$; under this bijection, rowmotion on the $c$-Cambrian lattice corresponds to the well-studied \emph{Kreweras complementation} operator on the $c$-noncrossing partition lattice of $W$ \cite{Barnard, ThomasWilliams}. See \cite{DefantLin, HopkinsCDE, ThomasWilliams} for other non-distributive lattices where rowmotion has been studied. 

Recently, the first author and Williams \cite{Semidistrim} introduced the even broader family of \emph{semidistrim} lattices and showed how to define a natural rowmotion operator on them; this is now the broadest family of lattices where rowmotion has been defined. It turns out that we can extend our definition of rowmotion Markov chains to semidistrim lattices; this provides a generalization of rowmotion Markov chains that is different from the toggle Markov chains discussed in \Cref{subsec:toggle}. Let us sketch the details here and wait until \Cref{sec:semidistrim} to define semidistrim lattices properly and explain why this definition specializes to the one given above when the lattice is distributive. 

Let $L$ be a semidistrim lattice, and let $\JJ_L$ and $\MM_L$ be the set of join-irreducible elements of $L$ and the set of meet-irreducible elements of $L$, respectively. There is a specific bijection $\kappa_L\colon\JJ_L\to\MM_L$ satisfying certain properties. The \emph{Galois graph} of $L$ is the loopless directed graph $G_L$ with vertex set $\JJ_L$ such that for all distinct $j,j'\in\JJ_L$, there is an arrow $j\to j'$ if and only if $j\not\leq\kappa_L(j')$. Let $\Ind(G_L)$ be the set of independent sets of $G_L$. There is a particular way to label the edges of the Hasse diagram of $L$ with elements of $\JJ_L$; we write $j_{uv}$ for the label of the edge $u\lessdot v$. For $w\in L$, let $\DD_L(w)$ be the set of labels of the edges of the form $u\lessdot w$, and let $\UU_L(w)$ be the set of labels of the edges of the form $w\lessdot v$. Then $\DD_L(w)$ and $\UU_L(w)$ are actually independent sets of $G_L$. Moreover, the maps $\DD_L,\UU_L\colon L\to \Ind(G_L)$ are bijections. The \emph{rowmotion} operator $\row\colon L\to L$ is defined by $\row=\UU_L^{-1}\circ\DD_L$. 

The \emph{rowmotion Markov chain} ${\bf M}_L$ has $L$ as its set of states. For each $j\in\JJ_L$, we fix a probability $p_j\in[0,1]$. Starting at a state $u\in L$, we choose a random subset $S$ of $\DD_L(u)$ by adding each element $j\in\DD_L(u)$ into $S$ with probability $p_j$ and then transition to the new state $u'=\row_L(\bigvee S)$. 

When $p_j=1$ for all $j\in\JJ_L$, the Markov chain $\bM_L$ is deterministic and agrees with rowmotion; indeed, this follows from \cite[Theorem~5.6]{Semidistrim}, which tells us that $\bigvee\DD_L(u)=u$ for all $u\in L$.  

Our main result about rowmotion Markov chains of semidistrim lattices is as follows. 

\begin{theorem}\label{thm:semidistrim_irreducible}
Let $L$ be a semidistrim lattice, and fix a probability $p_j\in(0,1)$ for each join-irreducible element $j\in\JJ_L$. The rowmotion Markov chain $\bM_L$ is irreducible. 
\end{theorem}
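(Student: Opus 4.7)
The plan is to reduce irreducibility to reachability from $\hat{1}$ and then exploit closure properties of the reachable set under the structural bijections of semidistrim lattices. To begin, I would identify two positive-probability transitions available from every state $u\in L$. Choosing $S=\emptyset$ (probability $\prod_{j\in\DD_L(u)}(1-p_j)>0$, since each $p_j<1$) gives $\bigvee\emptyset=\hat{0}$, and hence a transition to $\row_L(\hat{0})=\UU_L^{-1}(\DD_L(\hat{0}))=\UU_L^{-1}(\emptyset)=\hat{1}$. Choosing $S=\DD_L(u)$ (probability $\prod_{j\in\DD_L(u)}p_j>0$) yields $\bigvee\DD_L(u)=u$ by \cite[Theorem~5.6]{Semidistrim}, and hence a transition to $\row_L(u)$. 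The first transition lets us reach $\hat{1}$ from anywhere in one step; the second lets us iterate rowmotion, exhausting the $\row_L$-orbit of the current state (since $\row_L$ is a bijection on the finite set $L$, hence of finite order). It therefore suffices to prove that from $\hat{1}$, every $v\in L$ is reachable.

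I would next establish the clean characterization: $u\to w$ is a positive-probability transition if and only if $\UU_L(w)\subseteq\DD_L(u)$. For sufficiency, take $S=\UU_L(w)\subseteq\DD_L(u)$; since $\DD_L\colon L\to\Ind(G_L)$ is a bijection with inverse $S\mapsto\bigvee S$ (from $\bigvee\DD_L(u)=u$), we have $\bigvee\UU_L(w)=\DD_L^{-1}(\UU_L(w))=\row_L^{-1}(w)$, so the transition $u\to\row_L(\row_L^{-1}(w))=w$ happens with positive probability. For necessity, any $S\subseteq\DD_L(u)$ with $\bigvee S=\row_L^{-1}(w)$ must satisfy $S=\DD_L(\bigvee S)=\DD_L(\row_L^{-1}(w))=\UU_L(w)$ by the bijectivity of $\DD_L$. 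Let $\mc{R}\subseteq L$ denote the set of states reachable from $\hat{1}$. The characterization yields two closure properties for $\mc{R}$: it is closed under $\row_L$, and it satisfies the downward closure ``$u\in\mc{R}$ and $\DD_L(v)\subseteq\DD_L(u)$ imply $v\in\mc{R}$'' (since then $u\to\row_L(v)\in\mc{R}$, and $\row_L$-closure gives $v\in\mc{R}$).

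The main obstacle, which I expect to be the most delicate step, is to show that the smallest subset of $L$ containing $\hat{1}$ and closed under both operations above must equal $L$. I would iterate the closures: define $\mc{R}_0=\{\hat{1}\}$ and let $\mc{R}_{k+1}$ be the union of $\mc{R}_k$ with $\row_L(\mc{R}_k)$ and all $v\in L$ satisfying $\DD_L(v)\subseteq\DD_L(u)$ for some $u\in\mc{R}_k$; then the nested sequence stabilizes at some $\mc{R}_\infty$ after finitely many steps, and the claim becomes $\mc{R}_\infty=L$. The key sub-lemma to prove is that for every $v\in L$ there exists $u\in\mc{R}_\infty$ with $\DD_L(v)\subseteq\DD_L(u)$; equivalently, via the bijection $\DD_L$, the family $\{\DD_L(u):u\in\mc{R}_\infty\}$ covers $\Ind(G_L)$ under inclusion. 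I would prove this by induction on the rank of $v$ in $L$ (or a similar well-founded invariant), leveraging the interplay between the bijections $\DD_L,\UU_L\colon L\to\Ind(G_L)$ and the combinatorics of the Galois graph $G_L$ from \cite{Semidistrim}, using the fact that each application of $\row_L$ transforms the $\DD_L$-profile of $\mc{R}_k$ into a new one (since $\UU_L\circ\row_L=\DD_L$), so that iterating the two closures successively enlarges the covered family of independent sets until all of $\Ind(G_L)$ is exhausted.
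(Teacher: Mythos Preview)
Your setup is correct: the characterization $u\to w\iff\UU_L(w)\subseteq\DD_L(u)$ is exactly the transition law in \cref{def:semidistrim_Markov}, the reduction to reachability from $\hat{1}$ is sound, and the closure of the reachable set $\mc{R}$ under $\row_L$ and under ``$\DD_L(v)\subseteq\DD_L(u)\Rightarrow v\in\mc{R}$'' is a valid observation. But the paragraph you flag as the ``main obstacle'' is precisely where the proof lives, and you do not supply an argument there. Your proposed induction on ``the rank of $v$'' is not well-defined, since semidistrim lattices need not be graded; and the appeal to ``the interplay between $\DD_L,\UU_L$ and the combinatorics of $G_L$'' is not a mechanism---it does not explain why iterating your two closures starting from $\{\hat{1}\}$ must eventually produce an element whose $\DD_L$-set contains a given independent set. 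Nothing in the bijectivity of $\DD_L$ and $\UU_L$ alone forces this.

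The paper resolves this with a different induction: on $|L|$, not on any parameter of the target element. For $u\neq\hat{0}$ it restricts to the interval $[u,\hat{1}]$, which is again semidistrim by \cite[Theorem~7.8]{Semidistrim}. By induction $\bM_{[u,\hat{1}]}$ is irreducible, so there is a path from $\hat{1}$ to $u$ in that smaller chain. The crucial step is then to show that each arrow $w\to w'$ in $\bM_{[u,\hat{1}]}$ is also an arrow in $\bM_L$; this is done via the explicit bijections $\alpha_{u,\hat 1}$ and $\beta_{u,\hat 1}$ of \cref{thm:stuff_we_need}, which identify $\DD_{[u,\hat{1}]}$- and $\UU_{[u,\hat{1}]}$-labels with the corresponding $L$-labels and show $\bigwedge\kappa_{[u,\hat{1}]}(S)=\bigwedge\kappa_L(\alpha_{u,\hat{1}}^{-1}(S))$. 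The case $u=\hat{0}$ is handled by your rowmotion-orbit argument. The interval-restriction trick, backed by the structural results of \cite{Semidistrim} on intervals, is the missing idea in your proposal.
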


Let us remark that this theorem is not at all obvious. Our proof uses a delicate induction that relies on some difficult results about semidistrim lattices proven in \cite{Semidistrim}. For example, we use the fact that intervals in semidistrim lattices are semidistrim.

We can also generalize \Cref{thm:general_mixing} to the realm of semidistrim lattices in the following theorem. Given a semidistrim lattice $L$ and an element $u\in L$, we write $\ddeg(u)$ for the \dfn{down-degree} of $u$, which is the number of elements of $L$ covered by $u$. Let $\alpha(G_L)$ denote the independence number of the Galois graph $G_L$; that is, $\alpha(G_L)=\max\limits_{\mathcal I\in\Ind(G_L)}|\mathcal I|$. Equivalently, $\alpha(G_L)=\max\limits_{u\in L}\ddeg(u)$. If $P$ is a finite poset, then $\alpha(G_{J(P)})=\width(P)$.   

\begin{theorem}\label{thm:semidistrim_mixing}
Let $L$ be a semidistrim lattice, and fix a probability $p_j\in(0,1)$ for each $j\in \JJ_L$. Let $\overline p=\max\limits_{j\in\JJ_L}p_j$. For each $\varepsilon>0$, the mixing time of $\bM_{L}$ satisfies \[\tmix_{\bM_{L}}(\varepsilon)\leq\left\lceil\frac{\log\varepsilon}{\log\left(1-\left(1-\overline{p}\right)^{\alpha(G_L)}\right)}\right\rceil.\]
\end{theorem}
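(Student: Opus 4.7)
The strategy is a coupling argument that directly generalizes the proof of \Cref{thm:general_mixing}. The key observation is that from any state $u\in L$, one step of the chain $\bM_L$ lands at $\hat 1$ with probability at least $q:=(1-\overline p)^{\alpha(G_L)}$. Indeed, from $u$ the chain samples $S\subseteq\DD_L(u)$ by including each $j$ independently with probability $p_j$, and the event $\{S=\emptyset\}$ has probability
\[\prod_{j\in\DD_L(u)}(1-p_j)\geq(1-\overline p)^{|\DD_L(u)|}\geq(1-\overline p)^{\alpha(G_L)}=q,\]
where the last inequality uses that $\DD_L(u)\in\Ind(G_L)$, so $|\DD_L(u)|=\ddeg(u)\leq\alpha(G_L)$. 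When $S=\emptyset$, the chain jumps to $\row(\bigvee\emptyset)=\row(\hat 0)$, which equals $\hat 1$ because $\DD_L(\hat 0)=\emptyset=\UU_L(\hat 1)$ and $\UU_L$ is a bijection.

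From this I would build a Markovian coupling $(U_t,V_t)$ of two copies of $\bM_L$ starting at arbitrary states $u_0,v_0\in L$: at each step draw a common Bernoulli variable $B$ with success probability $q$, independent of everything before; if $B=1$, send both chains to $\hat 1$; if $B=0$, let each chain transition independently according to the correct conditional distribution given $\{B=0\}$. Because each marginal one-step probability of jumping to $\hat 1$ is at least $q$, the transition laws can be rescaled to accommodate this conditioning, and a routine check confirms that the coupling produces the correct one-step marginals for both chains. After the first step on which $B=1$, the two chains are both at $\hat 1$, and one couples them to evolve identically from then on.

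Let $\tau$ denote the coalescence time. Since the successive Bernoulli variables are independent with parameter $q$, we have $\PP(\tau>t)\leq(1-q)^t$. The standard coupling inequality then yields
\[\max_{x\in L}\dTV(Q^t(x,\cdot),\pi)\leq\PP(U_t\neq V_t)\leq(1-q)^t,\]
and solving $(1-q)^t<\varepsilon$ for $t$ gives the stated bound on $\tmix_{\bM_L}(\varepsilon)$. The crux is the uniform lower bound on the probability of reaching $\hat 1$ in a single step; once that is in hand, the coupling itself is essentially formal. The only place the semidistrim structure enters nontrivially is in identifying $\row(\hat 0)=\hat 1$ via the bijections $\DD_L$ and $\UU_L$, and in the bound $|\DD_L(u)|\leq\alpha(G_L)$, both of which are immediate from the machinery recalled in \Cref{subsec:toggle} and in the discussion preceding the theorem.
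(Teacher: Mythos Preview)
Your proposal is correct and follows essentially the same approach as the paper's proof: both establish the uniform lower bound $\mathbb P(u\to\hat 1)\geq(1-\overline p)^{\alpha(G_L)}$ via $|\DD_L(u)|\leq\alpha(G_L)$, then invoke the coupling argument from the proof of \Cref{thm:general_mixing}. The paper's version is terser (it simply states $\mathbb P(u\to\hat 1)=\prod_{j\in\DD_L(u)}(1-p_j)$ directly from \Cref{def:semidistrim_Markov} and then says ``the rest follows just as in the preceding proof''), whereas you spell out why $S=\emptyset$ lands at $\hat 1$ and give a more explicit description of the coupling, but the substance is the same.
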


We were not able to find a formula for the stationary distribution of the rowmotion Markov chain of an arbitrary semidistrim (or even semidistributive or trim) lattice; this serves to underscore the anomalistic nature of the formula for distributive lattices in \eqref{eq:stationary}. However, there is one family of semidistrim (in fact, semidistributive) lattices where we were able to find such a formula. Given positive integers $a$ and $b$, let $\hexx_{a,b}$ be the lattice obtained by taking two disjoint chains $x_1<\cdots <x_a$ and $y_1<\cdots< y_b$ and adding a bottom element $\hat 0$ and a top element $\hat 1$. Let us remark that $\hexx_{m-1,m-1}$ is isomorphic to the weak order of the dihedral group of order $2m$, whereas $\hexx_{m-1,1}$ is isomorphic to the $c$-Cambrian lattice of that same dihedral group (for any Coxeter element $c$). We have $\mathcal J_{\hexx_{a,b}}=\mathcal M_{\hexx_{a,b}}=\{x_1,\ldots,x_a,y_1,\ldots,y_b\}$. For $2\leq i\leq a$ and $2\leq i'\leq b$, we have $\kappa_{\hexx_{a,b}}(x_i)=x_{i-1}$ and $\kappa_{\hexx_{a,b}}(y_{i'})=y_{i'-1}$; moreover, $\kappa_{\hexx_{a,b}}(x_1)=y_b$ and $\kappa_{\hexx_{a,b}}(y_1)=x_a$.  This is illustrated in \Cref{Fig3} when $a=3$ and $b=2$. \Cref{Fig2} shows the transition diagram of ${\bf M}_{\text{\hexx}_{2,1}}$. 

\begin{figure}[ht]
  \begin{center}{\includegraphics[height=8.992cm]{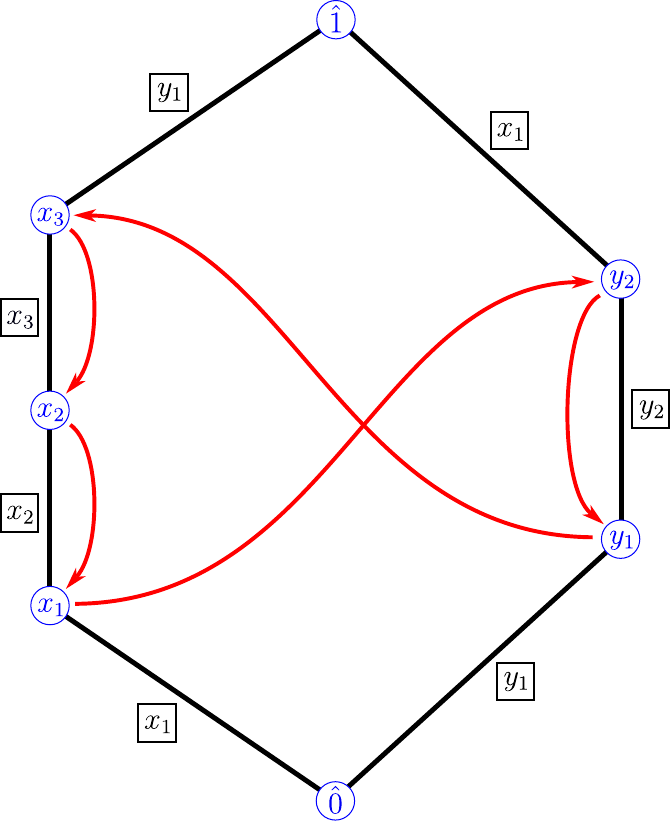}}
  \end{center}
  \caption{The lattice $\hexx_{3,2}$. Next to each edge $u\lessdot v$ is a box containing the edge label $j_{uv}$. The red arrows represent the action of $\kappa_{\hexx_{3,2}}$.}\label{Fig3}
\end{figure}

\begin{theorem}\label{thm:hexx}
Fix positive integers $a$ and $b$, and let $\kappa=\kappa_{\hexx_{a,b}}$. For each $j\in\JJ_{\text{\hexx}_{a,b}}$, fix a probability $p_j\in(0,1)$. There is a constant $Z(\hexx_{a,b})$ (depending only on $a$ and $b$) such that in the stationary distribution of $\bM_{\text{\hexx}_{a,b}}$, we have 
\begin{align*}
\mathbb P(\hat 0)&=\frac{1}{Z(\hexx_{a,b})}p_{x_1}p_{y_1}\left(1-\prod_{j\in\JJ_{\text{\hexx}_{a,b}}}p_j\right); \\
\mathbb P(\hat 1)&=\frac{1}{Z(\hexx_{a,b})}\left(1-\prod_{j\in\JJ_{\text{\hexx}_{a,b}}}p_j\right); \\ 
\mathbb P(x_i)&=\frac{1}{Z(\text{\hexx}_{a,b})}\left((1-p_{x_1})\prod_{\substack{j\in\JJ_{\text{\hexx}_{a,b}} \\ \kappa(j)\geq x_i}}p_j+(1-p_{y_1})\prod_{\substack{j\in\JJ_{\text{\hexx}_{a,b}} \\ \kappa(j)\not<x_i}}p_j\right)\quad\text{for}\quad 1\leq i \leq a; \\ 
\mathbb P(y_i)&=\frac{1}{Z(\text{\hexx}_{a,b})}\left((1-p_{y_1})\prod_{\substack{j\in\JJ_{\text{\hexx}_{a,b}} \\ \kappa(j)\geq y_i}}p_j+(1-p_{x_1})\prod_{\substack{j\in\JJ_{\text{\hexx}_{a,b}} \\ \kappa(j)\not<y_i}}p_j\right)\quad\text{for}\quad 1\leq i \leq b.  
\end{align*}
\end{theorem}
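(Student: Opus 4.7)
The plan is to check directly that the measure $\pi$ given by the stated formula satisfies $\pi = \pi Q$, where $Q$ is the transition matrix of $\bM_{\hexx_{a,b}}$; since $\bM_{\hexx_{a,b}}$ is irreducible by \Cref{thm:semidistrim_irreducible}, uniqueness of the stationary distribution on a finite irreducible Markov chain then finishes the proof.

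First I would compute rowmotion on $\hexx_{a,b}$ explicitly. Using the prescription for $\kappa$ together with the definition of the edge labels $j_{uv}$ of a semidistrim lattice, one obtains $\DD(\hat 0) = \emptyset$, $\DD(x_i) = \{x_i\}$, $\DD(y_i) = \{y_i\}$, and $\DD(\hat 1) = \{x_1,y_1\}$, with an analogous description of $\UU$. It follows that $\row$ consists of a $2$-cycle $\hat 0 \leftrightarrow \hat 1$ together with a single cycle $x_a \mapsto x_{a-1} \mapsto \cdots \mapsto x_1 \mapsto y_b \mapsto y_{b-1} \mapsto \cdots \mapsto y_1 \mapsto x_a$. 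The transition probabilities of $\bM_{\hexx_{a,b}}$ then read off directly: $\hat 0$ moves deterministically to $\hat 1$; each $x_i$ (resp.\ $y_i$) moves to $\row(x_i)$ (resp.\ $\row(y_i)$) with probability $p_{x_i}$ (resp.\ $p_{y_i}$) and to $\hat 1$ otherwise; and $\hat 1$ moves to $\hat 0,\,x_a,\,y_b,\,\hat 1$ with probabilities $p_{x_1}p_{y_1},\,(1-p_{x_1})p_{y_1},\,p_{x_1}(1-p_{y_1}),\,(1-p_{x_1})(1-p_{y_1})$, respectively.

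Write $P_x = \prod_{k=1}^a p_{x_k}$, $P_y = \prod_{k=1}^b p_{y_k}$, $P = P_xP_y$, $A = (1-p_{x_1})p_{y_1} + (1-p_{y_1})p_{x_1}P_y$, and $B = (1-p_{y_1})p_{x_1} + (1-p_{x_1})p_{y_1}P_x$. A direct inspection of the sets $\{j : \kappa(j) \ge x_i\}$ and $\{j : \kappa(j) \not< x_i\}$ (and their $y$-analogues) shows that the theorem's formula factors as
\[ \pi(x_i) = \frac{A}{Z(\hexx_{a,b})}\prod_{k=i+1}^a p_{x_k}, \qquad \pi(y_i) = \frac{B}{Z(\hexx_{a,b})}\prod_{k=i+1}^b p_{y_k}, \]
with $\pi(\hat 0) = Z(\hexx_{a,b})^{-1}p_{x_1}p_{y_1}(1-P)$ and $\pi(\hat 1) = Z(\hexx_{a,b})^{-1}(1-P)$. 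With this parameterization, most of the stationary equations become automatic: the interior equations $\pi(x_{i-1}) = p_{x_i}\pi(x_i)$ and $\pi(y_{i-1}) = p_{y_i}\pi(y_i)$ for $i \ge 2$ and the equation $\pi(\hat 0) = p_{x_1}p_{y_1}\pi(\hat 1)$ are built into the factorization. The equation $\pi(x_a) = p_{y_1}\pi(y_1) + (1-p_{x_1})p_{y_1}\pi(\hat 1)$ at the cycle entry point $x_a$ reduces after multiplying through by $Z(\hexx_{a,b})$ to the identity $BP_y + (1-p_{x_1})p_{y_1}(1-P) = A$, which is a one-line expansion in which the two $P$-terms coming from $BP_y$ and from $(1-p_{x_1})p_{y_1}(1-P)$ cancel; the equation at $y_b$ is symmetric.

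The one remaining check is the stationary equation at $\hat 1$, which after applying the telescoping identity $\sum_{i=1}^a (1-p_{x_i})\prod_{k=i+1}^a p_{x_k} = 1-P_x$ (and its $y$-analogue) to collapse the sums $\sum_i(1-p_{x_i})\pi(x_i)$ and $\sum_i(1-p_{y_i})\pi(y_i)$ reduces to the polynomial identity
\[ A(1-P_x) + B(1-P_y) = (1-P)\bigl(p_{x_1}+p_{y_1}-2p_{x_1}p_{y_1}\bigr). \]
This is the main computational obstacle, and it is the step that makes the formula look nontrivial; however, once $A$ and $B$ are expanded, the $P_x$-cross-terms cancel against each other, the $P_y$-cross-terms likewise cancel, and what remains is precisely $(p_{x_1}+p_{y_1}-2p_{x_1}p_{y_1})(1-P)$. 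Normalization $\sum_v\pi(v) = 1$ then determines $Z(\hexx_{a,b})$ and completes the proof.
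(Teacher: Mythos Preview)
Your proposal is correct and follows essentially the same route as the paper's proof: both compute the transition probabilities explicitly, write down the unnormalized measure, and verify the stationary equations state by state, using the telescoping identity $\sum_{i=1}^a(1-p_{x_i})\prod_{k>i}p_{x_k}=1-P_x$ to collapse the sums in the $\hat 1$ equation. Your introduction of the abbreviations $A$ and $B$ and the isolation of the two key polynomial identities (at $x_a$ and at $\hat 1$) is a mild organizational improvement over the paper's line-by-line expansion, but the content is identical.
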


\begin{figure}[ht]
  \begin{center}{\includegraphics[height=11.702cm]{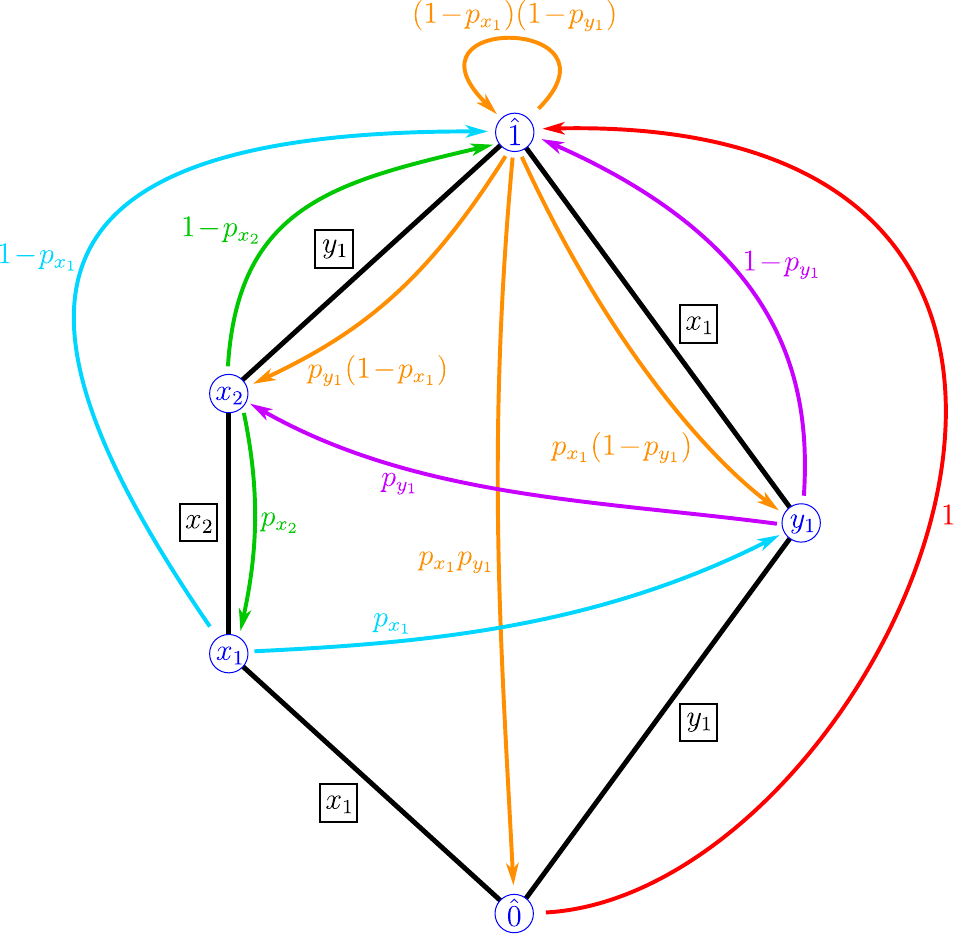}}
  \end{center}
  \caption{The transition diagram of $\bM_{\text{\hexx}_{2,1}}$ drawn over the Hasse diagram of $\hexx_{2,1}$. Next to each edge $u\lessdot v$ is a box containing the edge label $j_{uv}$. }\label{Fig2}
\end{figure}

\cref{sec:prelim} provides preliminary background on Markov chains and posets.
In \cref{sec:Toggle}, we prove \cref{thm:toggle_irreducible,thm:toggle_stationary}, which characterize when toggle Markov chains are irreducible and exhibit the stationary distributions of irreducible toggle Markov chains. In \cref{sec:semidistrim}, we recall how to define semidistrim lattices, define their rowmotion Markov chains, and prove \cref{thm:semidistrim_irreducible}, which states that such Markov chains are irreducible. \cref{sec:semidistrim} also contains the proof of \cref{thm:hexx}, which gives the stationary distribution of ${\bf M}_{\hexx_{a,b}}$. \cref{sec:mixing} is devoted to mixing times; it is in the section that we prove \cref{thm:general_mixing,cutoff}. We conclude in \cref{sec:conclusion} with a discussion of further research and open questions.

\section{Preliminaries}\label{sec:prelim}

\subsection{Markov Chains}

In this article, a (finite) \dfn{Markov chain} $\bM$ consists of a finite set $\Omega$ of \dfn{states} together with a \dfn{transition probability} $\mathbb P(s\to s')$ assigned to each pair $(s,s')\in\Omega\times\Omega$ so that $\sum_{s'\in\Omega}\mathbb P(s\to s')=1$ for every $s\in \Omega$. The set $\Omega$ is called the \dfn{state space} of $\bM$. We can represent $\bM$ via its \dfn{transition diagram}, which is the directed graph with vertex set $\Omega$ in which we draw an arrow $s\to s'$ labeled by the transition probability $\mathbb P(s\to s')$ whenever this transition probability is positive. We can also represent $\bM$ via its \dfn{transition matrix}, which is the matrix $Q=(Q(s,s'))_{s,s'\in\Omega}$ with rows and columns indexed by $\Omega$, where $Q(s,s')=\mathbb P(s\to s')$. Note that $Q$ is row-stochastic, meaning each of its rows consists of probabilities that sum to $1$. 

Say two states $s,s'\in\Omega$ \dfn{communicate} if there exist a directed path from $s$ to $s'$ and a directed path from $s'$ to $s$ in the transition diagram of $\bM$. There is an equivalence relation on $\Omega$ in which two states are equivalent if and only if they communicate; the equivalence classes are called \dfn{communicating classes}. We say $\bM$ is \dfn{irreducible} if there is exactly $1$ communicating class. 

A \dfn{stationary distribution} of $\bM$ is a probability distribution $\pi$ on $\Omega$ such that the vector $(\pi(s))_{s\in\Omega}$ is a left eigenvector of $Q$ with eigenvalue $1$. It is well known that if $\bM$ is irreducible, then it has a unique stationary distribution. 

\subsection{Posets}
All posets in this article are assumed to be finite. Given a poset $P$ and elements $x,y\in P$ with $x\leq y$, the \dfn{interval} from $x$ to $y$ is the set $[x,y]=\{z\in P:x\leq z\leq y\}$. Whenever we consider such an interval $[x,y]$, we will tacitly view it as a subposet of $P$. If $x<y$ and $[x,y]=\{x,y\}$, then we say $y$ \dfn{covers} $x$ and write $x\lessdot y$. 

A \dfn{lattice} is a poset $L$ such that any two elements $u,v\in L$ have a greatest lower bound, which is called their \dfn{meet} and denoted by $u\wedge v$, and a least upper bound, which is called their \dfn{join} and denoted by $u\vee v$. We denote the unique minimal element of $L$ by $\hat 0$ and the unique maximal element of $L$ by $\hat 1$. The meet and join operations are commutative and associative, so we can write $\bigwedge X$ and $\bigvee X$ for the meet and join, respectively, of an arbitrary subset $X\subseteq L$. We use the conventions $\bigwedge\emptyset=\hat 1$ and $\bigvee\emptyset=\hat 0$. An element that covers $\hat 0$ is called an \dfn{atom}.

\section{Irreducibility and Stationary Distributions of Toggle Markov Chains}\label{sec:Toggle}

In this section, we prove \Cref{thm:toggle_irreducible,thm:toggle_stationary}, which characterize when toggle Markov chains are irreducible (assuming each probability $p_x$ is strictly between $0$ and $1$) and provide the stationary distributions of irreducible toggle Markov chains, respectively. Recall the relevant notation and terminology from \cref{subsec:toggle}. 

\begin{proof}[Proof of \Cref{thm:toggle_irreducible}]
For each toggle operator $\tau_x$ and each set $A\in\KK$, the set $\tau_x(A)$ is either equal to $A$ or adjacent to $A$ in $\HH^P\vert_{\KK}$. Each transition in the Markov chain $\TT(\KK,{\bf x})$ is a composition of toggle operators. Therefore, if $\HH^P\vert_\KK$ is disconnected, then the Markov chain is not irreducible.  

We now prove the converse by induction on $n=|P|$. Assume $\HH^P\vert_\KK$ is connected. Suppose $A,A'\in\KK$ are nonempty sets that are adjacent in $\HH^P\vert_\KK$. We will show that there is a path from $A$ to $A'$ in the transition diagram of $\TT(\KK,{\bf x})$. Since $|A\triangle A'|=1$ and the sets $A$ and $A'$ are nonempty, there exists $z\in A\cap A'$. Let $\KK_z$ be the collection of all sets in $\KK$ that contain $z$. Let $\KK'$ be the vertex set of the connected component of $\HH^P\vert_{\KK_z}$ containing $A$ and $A'$. We can consider the toggle Markov chain $\TT(\KK',{\bf x})$. Let $\KK''=\{S\setminus\{z\}: S\in\KK'\}$. Let ${\bf x}'$ be the ordering of $P\setminus\{z\}$ obtained by deleting the element $z$ from ${\bf x}$. Since $\KK''$ is a collection of subsets of $P\setminus\{z\}$, we can consider the toggle Markov chain $\TT(\KK'',{\bf x}')$. The map $S\mapsto S\setminus \{z\}$ is an isomorphism from the connected graph $\HH^P\vert_{\KK'}$ to the graph $\HH^{P\setminus \{z\}}_{\KK''}$. This implies that $\HH^{P\setminus \{z\}}_{\KK''}$ is connected, so we can use induction to see that $\TT(\KK'',{\bf x}')$ is irreducible. Hence, there is a directed path from $A\setminus\{z\}$ to $A'\setminus\{z\}$ in the transition diagram of $\TT(\KK'',{\bf x}')$. The map $S\mapsto S\setminus\{z\}$ is also an isomorphism from the transition diagram of $\TT(\KK',{\bf x})$ to the transition diagram of $\TT(\KK'',{\bf x}')$, so there is a directed path from $A$ to $A'$ in the transition diagram of $\TT(\KK',{\bf x})$. This path is also present in the transition diagram of $\TT(\KK,{\bf x})$; indeed, whenever we apply the random toggle $\widetilde\tau_z$ to a set $B$, there is a positive probability (namely, $1-p_z$) that we do nothing and therefore keep $z$ in the set. 

It follows from the preceding paragraph that each connected component of $\HH^P\vert_{\KK\setminus\{\emptyset\}}$ is contained in a communicating class of $\TT(\KK,{\bf x})$. If $\emptyset\not\in \KK$, then this implies that $\TT(\KK,{\bf x})$ is irreducible. Let us now assume $\emptyset\in\KK$. It follows from the hypothesis that $\HH^P_\KK$ is connected that each connected component of $\HH^P\vert_{\KK\setminus\{\emptyset\}}$ contains a singleton set; hence, the proof will be complete if we can show that for every singleton set $\{x\}\in\KK$, there exist a directed path from $\{x\}$ to $\emptyset$ and a directed path from $\emptyset$ to $\{x\}$. Let us write ${\bf x}=(x_1,\ldots,x_n)$. Let $\{x_{i_1}\},\ldots,\{x_{i_r}\}$ be the singleton sets in $\KK$, where $i_1<\cdots<i_r$. Consider $j\in[r-1]$. Let \[B=(\tau_{x_1}\circ\cdots\circ\tau_{x_{i_{j}-1}})(\{x_{i_j}\})\quad\text{and}\quad B'=(\tau_{x_n}\circ\cdots\circ\tau_{x_{i_{j+1}+1}})(\{x_{i_{j+1}}\}).\] Note that $B$ is nonempty because it contains $x_{i_j}$ and that $B'$ is nonempty because it contains $x_{i_{j+1}}$. Thus, $B$ and $B'$ are in the connected components of $\HH^P\vert_{\KK\setminus\{\emptyset\}}$ containing $\{x_{i_j}\}$ and $\{x_{i_{j+1}}\}$, respectively. Because each connected component of $\HH^P\vert_{\KK\setminus\{\emptyset\}}$ is a communicating class of $\TT(\KK,{\bf x})$, there are directed paths from $\{x_{i_j}\}$ to $B$ and from $B'$ to $\{x_{i_{j+1}}\}$ in the transition diagram of $\TT(\KK,{\bf x})$. Also, because each toggle operator is an involution, we have $(\tau_{x_{i_j}}\circ\cdots\circ\tau_{x_1})(B)=\tau_{x_{i_j}}(\{x_{i_j}\})=\emptyset$ and $(\tau_{x_{i_{j+1}}}\circ\cdots\circ\tau_{x_{i_j+1}})(\emptyset)=\{x_{i_{j+1}}\}$. It follows that $\tau_P(B)=B'$, so there is a directed path from $B$ to $B'$ in the transition diagram of $\TT(\KK,{\bf x})$. Hence, there is a directed path from $\{x_{i_j}\}$ to $\{x_{i_{j+1}}\}$ in this transition diagram. A similar argument shows that there are directed paths from $\{x_{i_r}\}$ to $\emptyset$ and from $\emptyset$ to $\{x_{i_1}\}$. 
\end{proof}

\begin{proof}[Proof of \Cref{thm:toggle_stationary}]
For $S\in\KK$, let $\mu(S)=\prod\limits_{x\in S}p_x^{-1}$. Fix $A\in \KK$. We aim to show that $\sum\limits_{A'\in\KK}\mathbb P(A'\to A)\mu(A')=\mu(A)$.

Given a subset $U$ of $A$, let $\gamma(U)=\tau_{X\setminus U}^{-1}(A)$. We claim that $\gamma$ is a bijection from the collection of subsets of $A$ to the collection of sets $A'\in \KK$ such that $\mathbb P(A'\to A)>0$. It follows easily from the definition of the toggle operators that $\gamma(U)\cap A=U$. This implies that $\gamma$ is injective. It also shows that $U\subseteq\gamma(U)$. 

To see that $\gamma$ is surjective, suppose $A'\in\KK$ is such that $\mathbb P(A'\to A)>0$. Let $U=A\cap A'$. When we apply the sequence $\widetilde\tau_{x_1},\ldots,\widetilde\tau_{x_n}$ to $A'$ to obtain $A$, the set of elements $x$ such that the random toggle $\widetilde \tau_x$ does not apply the toggle $\tau_x$ is precisely $U$. This means that $A=\tau_{X\setminus U}(A')$, so $A'=\tau_{X\setminus U}^{-1}(A)=\gamma(U)$. Note also that $\mathbb P(A'\to A)=\prod\limits_{y\in A'\setminus U}p_y\prod\limits_{u\in U}(1-p_u)$. It follows that 
\begin{align*}
\sum_{A'\in\KK}\mathbb P(A'\to A)\mu(A')&=\sum_{U\subseteq A}\mathbb P(\gamma(U)\to A)\mu(\gamma(U)) \\ &=\sum_{U\subseteq A}\prod_{y\in \gamma(U)\setminus U}p_y\prod_{u\in U}(1-p_u)\prod_{x\in \gamma(U)}p_x^{-1} \\ 
&=\sum_{U\subseteq A}\prod_{u\in U}\frac{1-p_u}{p_u} \\&=\prod_{u\in A}\left(\frac{1-p_u}{p_u}+1\right) \\ &=\mu(A). \qedhere
\end{align*}
\end{proof}

\section{Semidistrim Lattices}\label{sec:semidistrim}

\subsection{Background}
This section follows \cite{Semidistrim}. Let $L$ be a lattice. An element $j\in L$ is called \dfn{join-irreducible} if it covers a unique element of $L$; in this case, we write $j_*$ for the unique element of $L$ covered by $j$.
Dually, an element $m\in L$ is called \dfn{meet-irreducible} if it is covered by a unique element of $L$; in this case, we write $m^*$ for the unique element of $L$ that covers $m$. Let $\JJ_L$ and $\MM_L$ be the set of join-irreducible elements of $L$ and the set of meet-irreducible elements of $L$, respectively. We say a join-irreducible element $j_0\in\JJ_L$ is \dfn{join-prime} if there exists $m_0\in\MM_L$ such that we have a partition $L=[j_0,\hat 1]\sqcup[\hat 0,m_0]$. In this case, $m_0$ is called \dfn{meet-prime}, and the pair $(j_0,m_0)$ is called a \dfn{prime pair}. 

A \dfn{pairing} on a lattice $L$ is a bijection $\kappa\colon \JJ_L\to \MM_L$ such that \[\kappa(j)\wedge j=j_*\quad\text{and}\quad\kappa(j)\vee j=(\kappa(j))^*\] for every $j\in \JJ_L$. (Not every lattice has a pairing.) We say $L$ is \dfn{uniquely paired} if it has a unique pairing; in this case, we denote the unique pairing by $\kappa_L$. If $L$ is uniquely paired and $(j_0,m_0)$ is a prime pair of $L$, then $\kappa_L(j_0)=m_0$.

Suppose $L$ is uniquely paired. For $u\in L$, we write \[J_L(u)=\{j\in\JJ_L:j\leq u\}\quad\text{and}\quad M_L(u)=\{j\in\JJ_L:\kappa_L(j)\geq u\}.\] There is an associated loopless directed graph $G_L$, called the \dfn{Galois graph} of $L$, defined as follows. The vertex set of $G_L$ is $\JJ_L$. For distinct $j,j'\in\JJ_L$, there is an arrow $j\to j'$ in $G_L$ if and only if $j\not\leq\kappa_L(j')$. An \dfn{independent set} of $G_L$ is a set $\mathcal I$ of vertices of $G_L$ such that for all $j,j'\in \mathcal I$, there is not an arrow from $j$ to $j'$ in $G_L$. Let $\Ind(G_L)$ be the set of independent sets of $G_L$. 

We say a uniquely paired lattice $L$ is \dfn{compatibly dismantlable} if either $|L|=1$ or there is a prime pair $(j_0,m_0)$ of $L$ such that the following compatibility conditions hold: 
\begin{itemize}
    \item $[j_0,\hat{1}]$ is compatibly dismantlable, and there is a bijection
    \[\alpha\colon M_L(j_0) \to \JJ_{[j_0,\hat 1]}\]
     given by $\alpha(j)=j_0\vee j$ such that $\kappa_{[j_0,\hat 1]}(\alpha(j))=\kappa_L(j)$ for all $j\in M_L(j_0)$;
    \item $[\hat{0},m_0]$ is compatibly dismantlable, and there is a bijection \[\beta\colon \kappa_L(J_L(m_0)) \to \MM_{[\hat 0,m_0]}\] given by $\beta(m)=m_0\wedge m$ such that $\beta(\kappa_L(j))=\kappa_{[\hat 0,m_0]}(j)$ for all $j\in J_L(m_0)$.
\end{itemize}
Such a prime pair $(j_0,m_0)$ is called a \dfn{dismantling pair} for $L$.

\begin{proposition}[{\cite[Proposition~5.3]{Semidistrim}}]
Let $L$ be a compatibly dismantlable lattice. For every cover relation $u\lessdot v$ in $L$, there is a unique join-irreducible element $j_{uv}\in J_L(v)\cap M_L(u)$.  
\end{proposition}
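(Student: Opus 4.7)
The plan is to proceed by induction on $|L|$. The base case $|L|=1$ holds vacuously since there are no cover relations. For the inductive step, I would fix a dismantling pair $(j_0,m_0)$ with its associated bijections $\alpha$ and $\beta$, giving the partition $L=[j_0,\hat 1]\sqcup[\hat 0,m_0]$. A cover $u\lessdot v$ falls into one of three cases based on which sub-interval contains each endpoint: both endpoints in $[j_0,\hat 1]$, both in $[\hat 0,m_0]$, or $u\in[\hat 0,m_0]$ with $v\in[j_0,\hat 1]$ (the fourth configuration is ruled out by $u\leq v$ together with the disjointness of the two parts).

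In the two non-crossing cases, I would apply the inductive hypothesis in the relevant sub-interval and transport the unique candidate back to $L$ via $\alpha$ or $\beta$. For the top interval, any $j\in J_L(v)\cap M_L(u)$ automatically lies in $M_L(j_0)$ because $\kappa_L(j)\geq u\geq j_0$, so $\alpha(j)=j_0\vee j\leq v$ is a join-irreducible of $[j_0,\hat 1]$ with $\kappa_{[j_0,\hat 1]}(\alpha(j))=\kappa_L(j)\geq u$; conversely, pulling back the inductive unique candidate along the bijection $\alpha$ yields an element of $J_L(v)\cap M_L(u)$. The bottom interval is symmetric: one uses the identification $\JJ_{[\hat 0,m_0]}=J_L(m_0)$ (forced by $\beta$ being a bijection) together with the equivalence of $\kappa_L(j)\geq u$ and $m_0\wedge\kappa_L(j)\geq u$ when $u\leq m_0$.

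The crucial case is the crossing cover, where I claim $j_0$ itself is the unique element. Existence is immediate since $\kappa_L(j_0)=m_0\geq u$ and $j_0\leq v$. For uniqueness, the engine is the identity $v\wedge m_0=u$, which follows from $u\leq v\wedge m_0<v$ (strict because $v\geq j_0\not\leq m_0$) combined with $u\lessdot v$. Now suppose $j\in J_L(v)\cap M_L(u)$. If $j\leq m_0$, then $j\leq v\wedge m_0=u\leq \kappa_L(j)$, contradicting $j\wedge\kappa_L(j)=j_*<j$. If $j\geq j_0$ with $j\neq j_0$, a short covering argument in $[j_0,\hat 1]$ shows $j_*\geq j_0$ (otherwise $j$ would be a minimal element of the sub-interval, forcing $j=j_0$); the analogous meet calculation $v\wedge\kappa_L(j)=u$ (valid since $\kappa_L(j)\not\geq j$ and hence $\kappa_L(j)\not\geq v$) yields $j_*\leq u\leq m_0$, which combined with $j_*\geq j_0$ forces $j_0\leq m_0$, a contradiction. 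The main obstacle will be this uniqueness argument for the crossing case, resting on the two meet identities $v\wedge m_0=u$ and $v\wedge \kappa_L(j)=u$ forced by the cover relation, together with the structural fact that any join-irreducible of $L$ strictly above $j_0$ has its unique lower cover also above $j_0$.
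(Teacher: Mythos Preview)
The paper does not prove this proposition; it is quoted verbatim from \cite[Proposition~5.3]{Semidistrim} and used as a black box. So there is no in-paper proof to compare against.

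That said, your inductive argument via a dismantling pair $(j_0,m_0)$ is the natural approach and is correct. A few remarks:

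In the two non-crossing cases your transport along $\alpha$ and $\beta$ works as stated. For the bottom interval, the identification $\JJ_{[\hat 0,m_0]}=J_L(m_0)$ does not actually require the bijection $\beta$: it holds simply because an element of $[\hat 0,m_0]$ covers exactly the same elements in $[\hat 0,m_0]$ as it does in $L$ (all lower covers of $j\leq m_0$ already lie below $m_0$).

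In the crossing case, your justification that $j_*\geq j_0$ is phrased as ``otherwise $j$ would be minimal in $[j_0,\hat 1]$,'' which is slightly roundabout. The direct argument is that $j_0<j$ (since $j\geq j_0$ and $j\neq j_0$) and $j$ is join-irreducible, so every element strictly below $j$ lies below the unique lower cover $j_*$; in particular $j_0\leq j_*$. With that in hand, your two meet identities $v\wedge m_0=u$ and $v\wedge\kappa_L(j)=u$ (both forced by $u\lessdot v$) finish the uniqueness exactly as you describe: the first rules out $j\leq m_0$, and the second forces $j_*\leq u\leq m_0$, contradicting $j_0\leq j_*$ via the disjointness of the prime-pair partition.
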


Suppose $L$ is compatibly dismantlable. The previous proposition allows us to label each edge $u\lessdot v$ in the Hasse diagram of $L$ with the join-irreducible element $j_{uv}$. For $w\in L$, we define the \dfn{downward label set} $\DD_L(w)=\{j_{uw}:u\lessdot w\}$ and the \dfn{upward label set} $\UU_L(w)=\{j_{wv}:w\lessdot v\}$. 

A lattice $L$ is called \dfn{semidistrim} if it is compatibly dismantlable and $\DD_L(w),\UU_L(w)\in\Ind(G_L)$ for all $w\in L$. As mentioned in \Cref{sec:intro}, semidistrim lattices generalize semidistributive lattices and trim lattices.

\begin{theorem}[{\cite[Theorem~6.2]{Semidistrim}}]
Semidistributive lattices are semidistrim, and trim lattices are semidistrim. Hence, distributive lattices are semidistrim. 
\end{theorem}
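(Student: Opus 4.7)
The plan is to reduce the third claim to the first two, since every finite distributive lattice is both semidistributive and trim, so it inherits the semidistrim property from either. Thus the real work is to verify the definition of semidistrim for semidistributive lattices and for trim lattices separately. For each class I need to check (i) that the lattice is uniquely paired, i.e. that there exists a unique bijection $\kappa_L\colon\JJ_L\to\MM_L$ satisfying $\kappa_L(j)\wedge j=j_*$ and $\kappa_L(j)\vee j=\kappa_L(j)^*$; (ii) that the lattice is compatibly dismantlable by induction via some prime pair $(j_0,m_0)$; and (iii) that $\DD_L(w),\UU_L(w)\in\Ind(G_L)$ for every $w\in L$.

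For the semidistributive case, I would build $\kappa_L$ from canonical join representations: in a finite join-semidistributive lattice, every element has a canonical join representation, which in particular gives, for each $j\in\JJ_L$, a unique meet-irreducible $\kappa_L(j)$ satisfying the stated relations (this is essentially Barnard's construction). Uniqueness of the pairing would follow from the uniqueness of canonical join representations combined with meet-semidistributivity. To set up the induction for compatible dismantlability I would argue that a finite semidistributive lattice always possesses at least one prime pair: starting from an atom that is join-prime (an atom generating a congruence whose complementary interval is $[\hat 0,m_0]$) produces such a pair, and I would use the fact proven in \cite{Semidistrim} that intervals in semidistributive lattices are semidistributive to apply the inductive hypothesis to $[j_0,\hat 1]$ and $[\hat 0,m_0]$. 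The compatibility conditions on $\kappa$ then follow from comparing the $\kappa$-maps in the intervals with the ambient $\kappa_L$ via the canonical join perspective. Finally, condition (iii) is equivalent to saying that the sets $\DD_L(w)$ and $\UU_L(w)$ are the canonical join/meet representations of $w$, which are irredundant and hence give independent sets of $G_L$ essentially by definition of the Galois graph.

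For the trim case, the strategy is parallel but uses the extremal/left-modular structure rather than canonical representations. An extremal left-modular lattice of length $\ell$ has exactly $\ell$ join-irreducibles and $\ell$ meet-irreducibles, and a maximal left-modular chain $\hat 0 = x_0\lessdot x_1 \lessdot\cdots\lessdot x_\ell=\hat 1$ together with the labeling of cover relations by join-irreducibles gives a natural bijection $\kappa_L$; here uniqueness of the pairing comes from the rigidity imposed by having exactly $\ell$ join- and meet-irreducibles. For the inductive step I would take $j_0=x_1$ (an atom on the left-modular chain) and its partner $m_0=\kappa_L(j_0)$; Thomas--Williams show that $[j_0,\hat 1]$ and $[\hat 0,m_0]$ inherit trimness, and that the required compatibility of $\kappa$ with the subinterval pairings holds because the restricted chains remain left-modular and extremal. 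The independence in $G_L$ of $\DD_L(w)$ and $\UU_L(w)$ would follow from the corresponding ``no redundancy'' statement in trim lattices: the downward labels at $w$ form the unique canonical join representation of $w$ in the trim setting (by \cite{ThomasWilliams}), and this canonicality is exactly the statement that none of these labels is below $\kappa_L$ of another.

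The main obstacle is step (ii), compatible dismantlability. Both producing a prime pair $(j_0,m_0)$ in the first place and verifying the two compatibility bijections $\alpha$ and $\beta$ are delicate: one needs the interval $[j_0,\hat 1]$ to be semidistributive (resp.\ trim), the interval $[\hat 0,m_0]$ similarly, and one needs the pairings on these intervals to agree with the restriction of $\kappa_L$ in the precise form specified by the definition. This is where I expect to lean most heavily on the structural results of \cite{Semidistrim}, especially the fact that the classes of semidistributive and trim lattices are each closed under taking intervals, and on a careful analysis of how $\kappa$ transforms under the decomposition $L=[j_0,\hat 1]\sqcup[\hat 0,m_0]$. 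Once (ii) is in place, (i) and (iii) are comparatively routine verifications on top of the standard characterizations of join-/meet-irreducibles in semidistributive and trim lattices.
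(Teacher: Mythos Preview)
The paper does not prove this theorem at all: it is quoted as \cite[Theorem~6.2]{Semidistrim} and used purely as background, with no argument given in the present article. There is therefore no ``paper's own proof'' to compare your proposal against. Your outline is a reasonable high-level sketch of the kind of argument that appears in the cited reference (unique pairing via canonical join representations or the left-modular chain, an inductive dismantling along a prime pair, and independence of label sets), but for the purposes of this paper the statement is simply imported, and no proof is expected or supplied here.
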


\begin{example}\label{exam:1}
Let us explicate how these general notions specialize when we consider a distributive lattice. Let $P$ be a finite poset, and let $L=J(P)$. For $x\in P$, write $\Delta(x)$ instead of $\Delta(\{x\})$ and $\nabla(x)$ instead of $\nabla(\{x\})$. There is a natural bijection $P\to \JJ_{L}$ given by $x\mapsto\Delta(x)$.
The unique pairing on $L$ is given by $\kappa_L(\Delta(x))=P\setminus\nabla(x)$. Every order ideal in $J(P)$ either contains $\Delta(x)$ or is contained in $P\setminus\nabla(x)$, but not both.
Hence, $(\Delta(x),P\setminus\nabla(x))$ is a prime pair (this shows that every join-irreducible element of a distributive lattice is join-prime). The Galois graph $G_L$ is isomorphic (via the map $x\mapsto\Delta(x)$) to the directed comparability graph of $P$, which has vertex set $P$ and has an arrow $x\to y$ for every strict order relation $y<x$ in $P$. The independent sets in $G_L$ correspond to antichains in $P$. 

It turns out that for any $x_0\in P$, the pair $(\Delta(x_0),P\setminus\nabla(x_0))$ is a dismantling pair. Indeed, the interval $[\Delta(x_0),\hat 1]=\{I\in J(P):x_0\in I\}$ can be identified with the lattice $J(P\setminus\Delta(x_0))$, and the set $M_L(\Delta(x_0))$ can be identified with $P\setminus\Delta(x_0)$. Hence, the bijection $\alpha\colon M_L(\Delta(x_0))\to\JJ_{[\Delta(x_0),\hat 1]}$ is the usual correspondence between elements of $P\setminus\Delta(x_0)$ and join-irreducible elements of $J(P\setminus\Delta(x_0))$. Similarly, $[\hat 0,P\setminus\nabla(x_0)]=\{I\in J(P):x_0\not\in I\}$ can be identified with the lattice $J(P\setminus\nabla(x_0))$, and the set $\kappa_L(J_L(P\setminus\nabla(x_0)))$ can be identified with $P\setminus\nabla(x_0)$. Hence, the bijection $\beta\colon \kappa_L(J_L(P\setminus\nabla(x_0)))\to\MM_{[\hat 0,P\setminus\nabla(x_0)]}$ is the usual correspondence between elements of $P\setminus\nabla(x_0)$ and meet-irreducible elements of $J(P\setminus\nabla(x_0))$. Whenever we have a cover relation $I\lessdot I'$ in $L$, there is a unique element $z\in P$ such that $I'=I\sqcup\{z\}$; then $j_{II'}=\Delta(z)$. For $I\in L$, the downward label set $\DD_L(I)$ and the upward label set $\UU_L(I)$ correspond (via the map $x\mapsto\Delta(x)$) to $\max(I)$ and $\min(P\setminus I)$, respectively; these are both independent sets in $G_L$ (i.e., antichains in $P$), so $L$ is semidistrim. 
\end{example}

\begin{theorem}[{\cite[Theorem~6.4]{Semidistrim}}]\label{thm:row_well_defined}
If $L$ is a semidistrim lattice, then the maps $\DD_L\colon L\to\Ind(G_L)$ and $\UU_L\colon L\to\Ind(G_L)$ are bijections. 
\end{theorem}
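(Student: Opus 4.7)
The plan is to prove the theorem by induction on $|L|$, exploiting the recursive structure built into the definition of compatibly dismantlable. The base case $|L|=1$ is immediate: $\JJ_L=\emptyset$, so $G_L$ has no vertices and $\Ind(G_L)=\{\emptyset\}=\{\DD_L(\hat 0)\}=\{\UU_L(\hat 0)\}$. For the inductive step, pick a dismantling pair $(j_0,m_0)$, giving the partition $L=[j_0,\hat 1]\sqcup[\hat 0,m_0]$. By the inductive hypothesis, $\DD_{[j_0,\hat 1]},\UU_{[j_0,\hat 1]},\DD_{[\hat 0,m_0]},\UU_{[\hat 0,m_0]}$ are all bijections onto their respective independent-set spaces. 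I would focus on $\DD_L$; the argument for $\UU_L$ follows by an order-dual version of the same proof.

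The first crucial step is to realize $G_{[j_0,\hat 1]}$ and $G_{[\hat 0,m_0]}$ inside $G_L$. Using the compatibility bijection $\alpha\colon M_L(j_0)\to\JJ_{[j_0,\hat 1]}$ with $\kappa_{[j_0,\hat 1]}\circ\alpha=\kappa_L|_{M_L(j_0)}$, one checks directly from the definition of the Galois graph that $G_{[j_0,\hat 1]}$ is isomorphic via $\alpha$ to the subgraph of $G_L$ induced on $M_L(j_0)$; the dual compatibility identifies $G_{[\hat 0,m_0]}$ with the subgraph of $G_L$ induced on $J_L(m_0)$. Thus, via the inductive hypothesis, elements of $[j_0,\hat 1]$ correspond bijectively to independent sets of $G_L$ supported on $M_L(j_0)$, and elements of $[\hat 0,m_0]$ correspond bijectively to independent sets of $G_L$ supported on $J_L(m_0)$. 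Next, one must track edge labels: for $w\in[j_0,\hat 1]$, argue that every cover relation $u\lessdot w$ in $L$ is either internal to $[j_0,\hat 1]$ (contributing a label in $M_L(j_0)$ identified with $\DD_{[j_0,\hat 1]}(w)$ through $\alpha$) or is the unique ``crossing'' cover $m_0\lessdot m_0^*$ with label $j_0$ (arising only when $w=m_0^*=j_0\vee m_0$). A parallel analysis applies to $[\hat 0,m_0]$. In both cases, $\DD_L(w)\in\Ind(G_L)$, and inverting via the inductive bijections together with the rule above gives a candidate inverse $\Ind(G_L)\to L$.

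The main obstacle is the bookkeeping around the special vertex $j_0$ (and dually $m_0$), because $j_0\notin M_L(j_0)\cup J_L(m_0)$, so $j_0$ is a vertex of $G_L$ not accounted for by either sub-interval's Galois graph. One must split $\Ind(G_L)$ according to whether an independent set $\mathcal I$ contains $j_0$: if $j_0\in\mathcal I$, then from the edge rule every other element of $\mathcal I$ lies in $J_L(m_0)\cap M_L(j_0)$, and one must verify these correspond exactly to elements of $[j_0,\hat 1]$ whose downward label set contains $j_0$ (i.e., those covering $m_0$-related elements); if $j_0\notin\mathcal I$, one must apportion $\mathcal I$ between the two sub-intervals based on whether $\mathcal I\subseteq M_L(j_0)$ or $\mathcal I\subseteq J_L(m_0)$, ruling out mixed independent sets via the edges induced by $\kappa_L$. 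Showing that these three families cover $\Ind(G_L)$ without overlap, and that the recovered element of $L$ is the unique $w\in L$ with $\DD_L(w)=\mathcal I$, is the delicate combinatorial core. Once this is done, bijectivity of $\DD_L$ follows, and the dual argument (or invocation that the opposite lattice is again semidistrim with the reversed Galois graph) yields the statement for $\UU_L$.
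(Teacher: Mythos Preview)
The paper does not contain a proof of this statement: it is quoted verbatim as \cite[Theorem~6.4]{Semidistrim} and used as a black box, with no argument given here. There is therefore nothing in this paper to compare your proposal against.

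For what it is worth, your inductive outline along a dismantling pair is the natural strategy and is in the spirit of how \cite{Semidistrim} develops the theory, but several of your assertions would require real work that you have only gestured at. In particular, the claim that an independent set $\mathcal I$ of $G_L$ not containing $j_0$ must lie entirely inside $M_L(j_0)$ or entirely inside $J_L(m_0)$ is not obviously true from the definitions alone (note that $M_L(j_0)$ and $J_L(m_0)$ overlap and together with $\{j_0\}$ cover $\JJ_L$, so the dichotomy you want is not a set-theoretic partition), and the analysis of which covers of $w\in[j_0,\hat 1]$ can cross into $[\hat 0,m_0]$ needs a careful argument rather than the assertion that there is a unique crossing cover with label $j_0$. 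These are exactly the ``delicate combinatorial core'' you flag, and in \cite{Semidistrim} they are handled through a sequence of lemmas rather than in a single stroke; your sketch correctly identifies the architecture but does not resolve these points.
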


Let $L$ be a semidistrim lattice. Using the preceding theorem, we can define the \dfn{rowmotion} operator $\row_L\colon L\to L$ by \[\row_L=\UU_L^{-1}\circ\DD_L.\] Referring to \Cref{exam:1}, we find that this definition agrees with the one given in \eqref{eq:row_def} when $L$ is distributive. Moreover, this definition coincides with the definition due to Barnard \cite{Barnard} when $L$ is semidistributive and with the definition due to Thomas and Williams \cite{ThomasWilliams} when $L$ is trim. 

\subsection{Rowmotion Markov Chains on Semidistrim Lattices}

We can now define rowmotion Markov chains on semidistrim lattices, thereby providing another generalization of the definition we gave in \Cref{subsec:distributive} for distributive lattices. 

\begin{definition}\label{def:semidistrim_Markov}
Let $L$ be a semidistrim lattice. For each $j\in \JJ_L$, fix a probability $p_j\in[0,1]$. We define the \dfn{rowmotion Markov chain} $\bM_L$ as follows. The state space of $\bM_L$ is $L$. For any $u,u'\in L$, the transition probability from $u'$ to $u$ is \[\mathbb P(u'\to u)=\begin{cases} \left(\prod\limits_{j\in \UU_L(u)}p_j\right)\left(\prod\limits_{j'\in \DD_L(u')\setminus\UU_L(u)}(1-p_{j'})\right) & \mbox{if }\UU_L(u)\subseteq\DD_L(u'); \\ 0 & \mbox{otherwise.}\end{cases}\]
\end{definition}

If $L$ is semidistrim and $u\in L$, then we have \cite[Theorem~5.6]{Semidistrim} \[u=\bigvee\DD_L(u)=\bigwedge\kappa_L(\UU_L(u)).\]
It follows from \Cref{thm:row_well_defined} that $\mathcal I=\DD_L(\bigvee\mathcal I)=\UU_L(\bigwedge\kappa_L(\mathcal I))$ for every $\mathcal I\in\Ind(G_L)$. 
This enables us to give a more intuitive description of the rowmotion Markov chain $\bM_L$ as follows.
Starting from a state $u\in L$, choose a random subset $S$ of $\DD_L(u)\in\Ind(G_L)$ by adding each element $j\in\DD_L(u)$ into $S$ with probability $p_j$; then transition to the new state $\bigwedge\kappa_L(S)=\row_L(\bigvee S)$.
Observe that if $p_j=1$ for all $j\in\JJ_L$, then $\bM_{L}$ is deterministic and agrees with rowmotion. On the other hand, if $p_j=0$ for all $j\in \JJ_L$, then $\bM_L$ is deterministic and sends all elements of $L$ to~$\hat 1$.

\subsection{Irreducibility}\label{sec:irreducible}
In this subsection, we prove \Cref{thm:semidistrim_irreducible}, which tells us that rowmotion Markov chains of semidistrim lattices are irreducible. When $L$ is a distributive lattice, this follows from \Cref{thm:toggle_irreducible}. To handle arbitrary semidistrim lattices, we need a different strategy that utilizes the following difficult result from \cite{Semidistrim}.  

\begin{theorem}[{\cite[Theorem~7.8, Corollary~7.9, Corollary~7.10]{Semidistrim}}]\label{thm:stuff_we_need}
Let $L$ be a semidistrim lattice, and let $[u,v]$ be an interval in $L$. Then $[u,v]$ is a semidistrim lattice. There are bijections \[\alpha_{u,v}\colon J_L(v)\cap M_L(u)\to\JJ_{[u,v]}\quad\text{and}\quad\beta_{u,v}\colon\kappa_L(J_L(v)\cap M_L(u))\to\MM_{[u,v]}\] given by $\alpha_{u,v}(j)=u\vee j$ and $\beta_{u,v}(m)=v\wedge m$. We have $\kappa_{[u,v]}(\alpha_{u,v}(j))=\beta_{u,v}(\kappa_L(j))$ for all $j\in J_L(v)\cap M_L(u)$. The map $\alpha_{u,v}$ is an isomorphism from an induced subgraph of the Galois graph $G_L$ to the Galois graph $G_{[u,v]}$. If $u\leq w\lessdot w'\leq v$ and $j_{ww'}$ is the label of the cover relation $w\lessdot w'$ in $L$, then $\alpha_{u,v}(j_{ww'})$ is the label of the same cover relation in $[u,v]$.  
\end{theorem}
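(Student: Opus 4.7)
The plan is to induct on $|L|$, with the base case $|L|=1$ trivial. For the inductive step, fix a dismantling pair $(j_0,m_0)$ of $L$, so that $L=[j_0,\hat 1]\sqcup[\hat 0,m_0]$, both $[j_0,\hat 1]$ and $[\hat 0,m_0]$ are (strictly smaller) compatibly dismantlable lattices, and the bijections $\alpha$, $\beta$ from the definition of compatibly dismantlable are in hand. Given an arbitrary interval $[u,v]$ of $L$, proceed by cases according to where $u$ and $v$ fall across the partition $L=[j_0,\hat 1]\sqcup[\hat 0,m_0]$.

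\textbf{Easy cases.} If $u\geq j_0$, then $[u,v]$ is an interval of $[j_0,\hat 1]$; apply the inductive hypothesis there, and then compose with $\alpha,\beta$ to reinterpret the resulting structure in terms of $L$. The formulas match because $u\geq j_0$ gives $u\vee(j_0\vee j)=u\vee j$ for any $j\in M_L(j_0)$, and dually $v\wedge(m_0\wedge m)=v\wedge m$ once one verifies $j\in J_L(v)\cap M_L(u)$ translates via $\alpha$ to the analogous set in $[j_0,\hat 1]$. The case $v\leq m_0$ is symmetric, going through $\beta$ and the compatibly-dismantlable structure of $[\hat 0,m_0]$.

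\textbf{Straddling case.} If $u\not\geq j_0$ and $v\not\leq m_0$, then $u\leq m_0$ and $v\geq j_0$. The plan is to exhibit $(u\vee j_0,\,v\wedge m_0)$ as a dismantling pair for $[u,v]$. First, show that $[u,v]=[u\vee j_0,v]\sqcup[u,v\wedge m_0]$: any $w\in[u,v]$ lies on one side of the partition of $L$, and $w\geq j_0$ forces $w\geq u\vee j_0$ while $w\leq m_0$ forces $w\leq v\wedge m_0$. Next, verify that $u\vee j_0$ is join-irreducible in $[u,v]$ and $v\wedge m_0$ is meet-irreducible in $[u,v]$, by using the prime-pair identities $\kappa_L(j_0)\wedge j_0=(j_0)_*$ and $\kappa_L(j_0)\vee j_0=(\kappa_L(j_0))^*$ to track cover relations into and out of these elements inside the interval. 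Since $[u\vee j_0,v]\subseteq[j_0,\hat 1]$ and $[u,v\wedge m_0]\subseteq[\hat 0,m_0]$, the inductive hypothesis (applied inside the strictly smaller semidistrim lattices $[j_0,\hat 1]$ and $[\hat 0,m_0]$) gives each half the full interval structure; the compatibility bijections required for $(u\vee j_0,v\wedge m_0)$ in $[u,v]$ are then obtained by gluing these two halves via $\alpha_{u\vee j_0,v}$ and $\beta_{u,v\wedge m_0}$.

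Once $[u,v]$ is known to be compatibly dismantlable (and hence, via $\DD_{[u,v]},\UU_{[u,v]}$, semidistrim), the identifications $\alpha_{u,v}(j)=u\vee j$ and $\beta_{u,v}(m)=v\wedge m$ are read off from the two halves of the partition together with the gluing, and the relation $\kappa_{[u,v]}(\alpha_{u,v}(j))=\beta_{u,v}(\kappa_L(j))$ follows from the analogous relations in each half plus the prime-pair identity $\kappa_L(j_0)=m_0$. For the Galois-graph isomorphism, given $j,j'\in J_L(v)\cap M_L(u)$, unwind: $\alpha_{u,v}(j)\leq\kappa_{[u,v]}(\alpha_{u,v}(j'))$ amounts to $u\vee j\leq v\wedge\kappa_L(j')$, which (since $u\leq v$ and $j,\kappa_L(j')\in[u,v]$ membership is automatic) is equivalent to $j\leq\kappa_L(j')$ — precisely the absence of the edge $j\to j'$ in $G_L$. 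Finally, for cover labels, the label $j_{ww'}$ of $u\leq w\lessdot w'\leq v$ lies in $J_L(w')\cap M_L(w)\subseteq J_L(v)\cap M_L(u)$; applying $\alpha_{u,v}$ to it produces an element of $\JJ_{[u,v]}\cap \MM_{[u,v]}$-like type that, by uniqueness of the cover-relation label in semidistrim lattices, must equal the label of $w\lessdot w'$ in $[u,v]$.

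The main obstacle is the straddling case: proving that $(u\vee j_0,v\wedge m_0)$ is a \emph{bona fide} dismantling pair of $[u,v]$, because this requires not only the partition identity and join/meet-irreducibility checks but also the correct bijections $\alpha^{[u,v]},\beta^{[u,v]}$ built from the two halves — and these must be shown to satisfy the $\kappa$-compatibility condition in $[u,v]$ without circularly invoking the very theorem under proof for $[u,v]$ itself. All remaining parts of the theorem (bijection formulas, $\kappa$-intertwining, Galois-graph isomorphism, edge-label preservation) reduce, once compatibly-dismantlability of $[u,v]$ is secured, to routine bookkeeping combined with the inductive hypothesis applied to the two strictly smaller subintervals.
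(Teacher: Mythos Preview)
This theorem is not proved in the present paper; it is quoted from \cite{Semidistrim} (Theorem~7.8, Corollary~7.9, Corollary~7.10 there) and invoked as a black box in the proof of \Cref{thm:semidistrim_irreducible}. There is therefore no proof in this paper to compare your proposal against.

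That said, as a sketch of an independent argument your outline follows the natural inductive architecture, and you correctly flag the straddling case as the heart of the matter. Two substantive gaps deserve mention. First, the implication ``compatibly dismantlable $\Rightarrow$ semidistrim'' that you slip in parenthetically is not automatic: semidistrim requires the additional hypothesis that every $\DD_{[u,v]}(w)$ and $\UU_{[u,v]}(w)$ be independent in $G_{[u,v]}$. You therefore need the Galois-graph isomorphism (and must transport the independence condition from $G_L$ through it) \emph{before} concluding that $[u,v]$ is semidistrim, not after; the order of your last two paragraphs is inverted. Second, the assertion that $u\vee j_0$ is join-irreducible in $[u,v]$ does not follow from the prime-pair identities for $j_0$ alone: one must rule out $u\vee j_0$ covering two distinct elements of $[u,v\wedge m_0]$, and this relies on finer structural facts about cover relations in semidistrim lattices than you cite. (A minor point: in your Galois-graph step the parenthetical ``$j,\kappa_L(j')\in[u,v]$ membership is automatic'' is misstated, since those elements need not lie in $[u,v]$; the equivalence you want is nonetheless correct and follows from $j\leq v$ and $\kappa_L(j')\geq u$.) Your roadmap is reasonable, but completing the straddling case is substantially more than ``routine bookkeeping.''
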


\begin{proof}[Proof of \Cref{thm:semidistrim_irreducible}]
Let $L$ be a semidistrim lattice. The proof is trivial when $|L|=1$, so we may assume $|L|\geq 2$ and proceed by induction on $|L|$. Fix $u\in L$. The transition diagram of $\bM_L$ contains an arrow $u\to \hat 1$; our goal is to prove that it also contains a path from $\hat 1$ to $u$. 

First, suppose $u=\hat 0$. Let $k$ be the size of the orbit of $\row_L$ containing $\hat 1$. The transition diagram of $\bM_L$ contains the path \[\hat 1\to\row_L(\hat 1)\to\row_L^2(\hat 1)\to\cdots\to\row_L^{k-1}(\hat 1).\] But $\row_L(\hat 0)=\hat 1=\row_L(\row_L^{k-1}(\hat 1))$, so $\row_L^{k-1}(\hat 1)=\hat 0=u$. This demonstrates that the desired path exists in this case.

Now suppose $u\neq\hat 0$. By \Cref{thm:stuff_we_need}, the interval $[u,\hat 1]$ is a semidistrim lattice, so it follows by induction that $\bM_{[u,\hat 1]}$ is irreducible. Suppose $w\to w'$ is an arrow in the transition diagram of $\bM_{[u,\hat 1]}$. This means that there exists a set $S\subseteq \DD_{[u,\hat 1]}(w)$ such that $w'=\bigwedge\kappa_{[u,\hat 1]}(S)$. Let $\alpha_{u,\hat 1}$ and $\beta_{u,\hat 1}$ be the bijections from \Cref{thm:stuff_we_need} (where we have set $v=\hat 1$). Note that $\beta_{u,\hat 1}(m)=\hat 1\wedge m=m$ for all $m\in\kappa_L(M_L(u))$. Let
$T=\alpha_{u,\hat 1}^{-1}(S)$. It follows from \Cref{thm:stuff_we_need} that $T\subseteq\DD_L(w)$ and that \[w'=\bigwedge\kappa_{[u,\hat 1]}(S)=\bigwedge\kappa_{[u,\hat 1]}(\alpha_{u,\hat 1}(T))=\bigwedge\beta_{u,\hat 1}(\kappa_L(T))=\bigwedge\kappa_L(T).\] This shows that $w\to w'$ is an arrow in the transition diagram of $\bM_{L}$. 

We have proven that all arrows in the transition diagram of $\bM_{[u,\hat 1]}$ are also arrows in the transition diagram of $\bM_L$. Since $\bM_{[u,\hat 1]}$ is irreducible, there is a path from $\hat 1$ to $u$ in the transition diagram of $\bM_{[u,\hat 1]}$. This path is also in the transition diagram of $\bM_{L}$, so the proof is complete. 
\end{proof}

\subsection{A Special Class of Semidistrim Lattices}\label{sec:stationary}

This subsection is devoted to proving \cref{thm:hexx}.
That is, we will compute the stationary distribution of the rowmotion Markov chain of $\hexx_{a,b}$.

\begin{proof}[Proof of \Cref{thm:hexx}]
Recall that $\hexx_{a,b}$ is obtained by adding the minimal element $\hat 0$ and the maximal element $\hat 1$ to the disjoint chains $x_1<\cdots<x_a$ and $y_1<\cdots<y_b$. The join-irreducibles are $x_1,\dots,x_a,y_1,\dots,y_b$; for simplicitly, let $q_i=p_{x_i}$ and $r_i = p_{y_i}$.
The transition probabilities of ${\bf M}_{\text{\hexx}_{a,b}}$ are as follows:
\begin{equation*}
    \setlength{\tabcolsep}{12pt}
    \begin{tabular}{llll}
        $\P(\hat 1 \to \hat 0) = q_1 r_1$ & $\P(\hat 0 \to \hat 1) = 1$ & $\P(x_{i+1}\to x_i) = q_{i+1}$ & $\P(y_{i+1} \to y_i) = r_{i+1}$ \\
        $\P(\hat 1 \to \hat 1) = (1-q_1)(1-r_1)$ & & $\P(x_1 \to y_b) = q_1$ & $\P(y_1 \to x_a) = r_1$ \\
        $\P(\hat 1 \to x_a) = (1-q_1)r_1$ & & $\P(x_i \to \hat 1) = 1-q_i$ & $\P(y_i \to \hat 1) = 1-r_i$ \\
        $\P(\hat 1 \to y_b) = q_1(1-r_1)$. &&& 
    \end{tabular}
\end{equation*}
Removing the $Z(\hexx_{a,b})$ normalization factor, it suffices to show the following measure $\mu$ is stationary:
\begin{align*}
    \mu(\hat 0) &= q_1r_1\left(1-\prod_{i=1}^a q_i \prod_{i'=1}^b r_{i'}\right) \\
    \mu(\hat 1) &= 1-\prod_{i=1}^a q_i \prod_{i'=1}^b r_{i'} \\
    \mu(x_i) &= (1-q_1) r_1 \prod_{k=i+1}^a q_k + q_1(1-r_1) \prod_{k=i+1}^a q_k \prod_{k'=1}^b r_{k'} \\
    \mu(y_i) &= q_1(1-r_1) \prod_{k=i+1}^b r_k + (1-q_1)r_1\prod_{k=1}^a q_j \prod_{k'=i+1}^b r_{k'}.
\end{align*}
First, we have
\begin{align*}
    \sum_{z\in\,\hexx_{a,b}}\mu(z)\P(z\to\hat0)
    = q_1r_1\mu(\hat 1) &= \mu(\hat 0); \\
    \sum_{z\in\,\hexx_{a,b}}\mu(z)\P(z\to x_i)
    = q_{i+1}\mu(x_{i+1}) &= \mu(x_i); \\
    \sum_{z\in\,\hexx_{a,b}}\mu(z)\P(z\to y_j) = r_{i'+1}\mu(y_{i'+1}) &= \mu(y_{i'})
\end{align*}
for $1 \leq i \leq a-1$ and $1 \leq i' \leq b-1$.
For $\hat 1$, we have
\begin{align*}
    \sum_{z\in\,\hexx_{a,b}}\mu(z)\P(z\to\hat1)
    &= (1-q_1)(1-r_1)\mu(\hat1) + \mu(\hat0) + \sum_{i=1}^a (1-q_i)\mu(x_i) + \sum_{i'=1}^b (1-r_{i'})\mu(y_{i'}).
\end{align*}
To show this equals $\mu(\hat1)$, it suffices to show
\begin{align*}
    \sum_{i=1}^a (1-q_i)\mu(x_i) + \sum_{i'=1}^b (1-r_{i'})\mu(y_{i'}) = (q_1 + r_1 - 2q_1r_1)\mu(\hat1).
\end{align*}
We expand the first sum as
\begin{align*}
    \sum_{i=1}^a (1-q_i)\mu(x_i)
    &= (1-q_1)r_1 \sum_{i=1}^a \left((1-q_i)\prod_{k=i+1}^a q_k\right) + q_1(1-r_1)\prod_{i'=1}^b r_{i'} \sum_{i=1}^a \left((1-q_i)\prod_{k=i+1}^a q_k\right)
    \\ &= (1-q_1)r_1\left(1-\prod_{i=1}^a q_i\right) + q_1(1-r_1)\prod_{i'=1}^b r_{i'}\left(1-\prod_{i=1}^a q_i\right)
    \\ &= (1-q_1)r_1 + q_1(1-r_1)\prod_{i'=1}^b r_{i'} - (1-q_1)r_1\prod_{i=1}^a q_i - q_1(1-r_1)\prod_{i=1}^a q_i \prod_{i'=1}^b r_{i'}
\end{align*}
and similarly expand the second sum as
\begin{align*}
    \sum_{i'=1}^b (1-r_{i'})\mu(y_{i'})
    = q_1(1-r_1) + (1-q_1)r_1\prod_{i=1}^a q_i -q_1(1-r_1)\prod_{i'=1}^b r_{i'}- (1-q_1)r_1\prod_{i=1}^a q_i \prod_{i'=1}^b r_{i'}.
\end{align*}
Combining them yields
\begin{align*}
    \sum_{i=1}^a (1-q_i)\mu(x_i) + \sum_{i=1}^b (1-r_i)\mu(y_i)
    &= \left((1-q_1)r_1 + q_1(1-r_1)\right)\left(1-\prod_{i=1}^a q_i\prod_{i'=1}^b r_{i'}\right)
    \\ &= (q_1+r_1-2q_1r_1)\mu(\hat 1),
\end{align*}
as desired.
It remains to check $x_a$ and $y_b$.
For $x_a$, we have
\begin{align*}
    \sum_{z\in\,\hexx_{a,b}}\mu(z)\P(z\to x_a)
    &= (1-q_1)r_1\mu(\hat1) + r_1\mu(y_1)
    \\ &= (1-q_1)r_1 - (1-q_1)r_1 \prod_{i=1}^a q_i \prod_{i'=1}^b r_{i'}  \\ &\hphantom{=}+ q_1(1-r_1)\prod_{i'=1}^b r_{i'} + (1-q_1)r_1 \prod_{i=1}^a q_i \prod_{i'=1}^b r_{i'}
    \\ &= (1-q_1)r_1 + q_1(1-r_1)\prod_{i'=1}^b r_{i'} \\ 
    &= \mu(x_a).
\end{align*}
The computation for $y_b$ is essentially identical. This completes the proof that $\mu$ is stationary.
\end{proof}

\section{Mixing Times}\label{sec:mixing}
We now study the mixing times of rowmotion Markov chains.

\subsection{Couplings}
Let ${\bf M}$ be an irreducible Markov chain with state space $\Omega$, stationary distribution $\pi$, and transition probabilities $\mathbb P(s\to s')$ for all $s,s'\in \Omega$. A \dfn{Markovian coupling} for ${\bf M}$ is a sequence $(X_i,Y_i)_{i\geq 0}$ of pairs of random variables with values in $\Omega$ such that for every $i\geq 0$ and all $s,s',s''\in\Omega$, we have \[\mathbb P(X_{i+1}=s\vert X_i=s', Y_i=s'')=\mathbb P(s'\to s)\] and \[\mathbb P(Y_{i+1}=s\vert X_i=s', Y_i=s'')=\mathbb P(s''\to s).\] 
It is well known that
\begin{equation}
    \dTV(Q^i(x,\cdot),\pi) \leq \P(X_i\neq Y_i)
\end{equation}
for any Markovian coupling for ${\bf M}$ with $X_0=x$ and $Y_0\sim\pi$.
In fact, for a \dfn{coupling} of two distributions $\mu$ and $\nu$ on $\Omega$, i.e., a joint distribution $(X,Y)$ with marginal distributions $X\sim\mu$ and $Y\sim\nu$, we have
\begin{equation}
   \dTV(\mu,\nu) \leq \P(X\neq Y),
\end{equation}
and this inequality becomes equality when taking the infimum of $\P(X\neq Y)$ over all such couplings $(X,Y)$.
A coupling $(X,Y)$ such that $\dTV(\mu,\nu)=\P(X\neq Y)$ always exists and is called an \dfn{optimal} coupling.

\subsection{General Upper Bound}
We now prove \cref{thm:general_mixing}.
\begin{proof}[Proof of \cref{thm:general_mixing}]
Recall that $\overline p = \max\limits_{x\in P} p_x$.
For any $I\in J(P)$, we have (viewing $P$ as an element of $J(P)$)
\[ \mathbb P(I\to P)=\prod_{x\in\max(I)} (1-p_x) \geq (1-\overline p)^{\lvert\max(I)\rvert} \geq (1-\overline p)^{\width(P)}. \]
Thus, for any $I\in J(P)$, we can construct a Markovian coupling $(X_i,Y_i)_{i\geq 0}$ with $X_0=I$ and $Y_0\sim\pi$ such that
\begin{itemize}
\item if $X_i\neq Y_i$, then $X_{i+1}=Y_{i+1}=P$ with probability at least $(1-\overline p)^{\width(P)}$ (the other transition probabilities do not matter so long as they induce the correct marginal transition probabilities) and 
\item if $X_i=Y_i$, then $X_{i+1}=Y_{i+1}$.
\end{itemize}
This implies
\[ \P(X_{i+1}\neq Y_{i+1})\leq \left(1-(1-\overline p)^{\width(P)}\right)\P(X_i\neq Y_i)\] for all $i\geq 0$, so
\[ \dTV(Q^k(x,\cdot),\pi) \leq \P(X_k\neq Y_k) \leq \left(1-(1-\overline p)^{\width(P)}\right)^k \]
for all $k \geq 0$.
As the inequality
\[ \left(1-(1-\overline p)^{\width(P)}\right)^k \leq \varepsilon \]
is equivalent to
\[ i \geq \frac{\log\varepsilon}{\log\left(1-(1-\overline p)^{\width(P)}\right)}, \]
we have
\[ \tmix_{\bM_{J(P)}}(\varepsilon) \leq \left\lceil \frac{\log\varepsilon}{\log\left(1-(1-\overline p)^{\width(P)}\right)}\right\rceil, \]
as desired.
\end{proof}
\begin{proof}[Proof of \cref{thm:semidistrim_mixing}]
    Let $L$ be a semidistrim lattice, and consider the Markov chain ${\bf M}_L$. For each $u\in L$, we have
    \[ \mathbb P(u\to\hat 1)=\prod_{j\in\DD_L(u)} (1-p_j) \geq (1-\overline p)^{|\DD_L(u)|}\geq (1-\overline p)^{\alpha(G_L)}.\]
    The rest of the proof then follows just as in the preceding proof of \Cref{thm:general_mixing}.
\end{proof}
\begin{remark}\label{rem:improve_mixing}
    Let $\mathcal A(P)$ be the set of antichains of a poset $P$. We can straightforwardly improve the $\log\left(1-(1-\overline p)^{\width(P)}\right)$ term of \cref{thm:general_mixing} by instead using
    \[ \log\left(1-\min_{A\in\mathcal A(P)}\prod_{x\in A}(1-p_x)\right); \]
    however, when $p_x=p$ is the same across all $x\in P$, or more generally when some antichain $A$ of size $|A|=\width(P)$ has $p_x=\overline p$ for all $x\in A$, these two bounds coincide.
Similarly, we can improve the $\log\left(1-(1-\overline p)^{\alpha(G_L)}\right)$ term of \cref{thm:semidistrim_mixing} by instead using
    \[ \log\left(1-\min_{\mc I\in\Ind(G_L)}\prod_{j\in\mc I} (1-p_j)\right). \]
\end{remark}

\subsection{Boolean Lattices}\label{subsec:boolean}
In this subsection, we present the proof of \cref{cutoff}. Let $P$ be an $n$-element antichain. 
Fix a probability $p\in(0,1)$, and let $p_x=p$ for all $x\in P$. The set of states of ${\bf M}_{J(P)}$ is $\Omega=2^P$, the power set of $P$. For $I,I'\subseteq P$, we have \[\mathbb P(I\to I')=\begin{cases} p^{|P\setminus I'|}(1-p)^{|I\cap I'|} & \mbox{if }P\setminus I'\subseteq I; \\ 0 & \mbox{otherwise.}\end{cases}\]
Let $Q$ denote the transition matrix of ${\bf M}_{J(P)}$. Let $\pi$ be the stationary distribution of ${\bf M}_{J(P)}$, which we computed explicitly in \cref{thm:toggle_stationary}. 

We begin by discussing the spectrum of $Q$. For $I\subseteq P$, define $f_I:2^P \to \mathbb{C}$ by
$f_I (A)= p^{-|I|/2}(-p)^{|I\cap A|}$. 

\begin{lemma}\label{orth}
The eigenvalues of $Q$ are the numbers $(-p)^k$ for $0\leq k\leq n$. A basis for the eigenspace of $Q$ with eigenvalue $(-p)^k$ is $\{f_I:I\subseteq[n], |I|=k\}$. Moreover, the basis $\{f_I:I\subseteq P\}$ of eigenvectors of $Q$ is orthonormal with respect to $\pi$.  
\end{lemma}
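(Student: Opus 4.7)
The plan is to verify the eigenvalue equation and orthonormality by direct computation, then conclude by counting dimensions. Since $P$ is an antichain, $J(P) = 2^P$ and the transition sum has a particularly clean form that makes binomial identities do all the work.

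First I would show $Qf_I = (-p)^{|I|} f_I$. Expanding
\[
(Qf_I)(A) = \sum_{B : P \setminus B \subseteq A} p^{|P \setminus B|}(1-p)^{|A \cap B|}\, p^{-|I|/2}(-p)^{|I \cap B|},
\]
I would parametrize by $C = P \setminus B$, which ranges over all subsets of $A$. Using $|A \cap B| = |A| - |C|$ and $|I \cap B| = |I| - |I \cap C|$, the sum decouples after splitting $C$ as $(C \cap I) \sqcup (C \setminus I)$ with $C \cap I \subseteq A \cap I$ and $C \setminus I \subseteq A \setminus I$. The sum over $C \setminus I$ collapses to $1$ via the binomial theorem with $p + (1-p) = 1$, and the sum over $C \cap I$ gives a factor $(-p)^{|A \cap I|}$ via $(1-p) + (-1) = -p$. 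Collecting prefactors yields exactly $(-p)^{|I|} f_I(A)$.

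Next I would verify orthonormality with respect to $\pi$. Using the formula $\pi(A) = p^{n-|A|}/(1+p)^n$ that follows from \cref{thm:toggle_stationary}, I decompose $A = A_1 \sqcup A_2 \sqcup A_3$ with $A_1 \subseteq I \cap J$, $A_2 \subseteq I \triangle J$, and $A_3 \subseteq P \setminus (I \cup J)$. The identity $|I \cap A| + |J \cap A| = 2|A_1| + |A_2|$ makes $\langle f_I, f_J \rangle_\pi$ factor as a product of three independent binomial sums in $|A_1|,|A_2|,|A_3|$. The $A_2$ factor is $\sum_k \binom{|I \triangle J|}{k}(-1)^k = 0^{|I \triangle J|}$, which vanishes unless $I = J$. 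When $I = J$, the $A_1$ and $A_3$ factors become $(1+p)^{|I|}$ and $((1+p)/p)^{n-|I|}$ respectively, and after combining with the prefactor $p^{-|I|} p^n/(1+p)^n$ everything collapses to $1$.

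Having $2^n$ orthonormal vectors gives linear independence, and the eigenspace structure follows by dimension count: $\{f_I : |I|=k\}$ consists of $\binom{n}{k}$ linearly independent vectors in the eigenspace of $(-p)^k$, and these dimensions sum to $2^n = \dim \mathbb C^{2^P}$, so each such set must be a basis for its eigenspace. The only real obstacle is the bookkeeping in the eigenrelation sum; a conceptual shortcut is to observe that when $P$ is an antichain, $\bM_{J(P)}$ factors as a product of $n$ independent two-state chains (one per $x \in P$, with transition matrix having eigenvalues $1$ and $-p$), and that $f_I$ is the tensor product of the corresponding single-coordinate eigenvectors, reducing the whole lemma to a $2 \times 2$ verification.
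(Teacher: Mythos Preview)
Your proposal is correct and follows essentially the same approach as the paper: direct binomial computation for both the eigenrelation and the orthonormality, followed by a dimension count. Your bookkeeping is slightly cleaner than the paper's---you split $A$ into three regions $I\cap J$, $I\triangle J$, $P\setminus(I\cup J)$ rather than the paper's four ($I\cap J$, $I\setminus J$, $J\setminus I$, complement), which makes the vanishing factor $0^{|I\triangle J|}$ pop out in one line---but the underlying identities are identical. The tensor-product observation at the end is a genuine bonus not present in the paper; it explains \emph{why} the computation works and would let you replace all the binomial bookkeeping with a one-line $2\times 2$ check, so if you want the shortest argument you could simply lead with that.
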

 \begin{proof}
 For $I,A\subseteq P$, we have
     \begin{align*}Qf_I(A)&=\sum_{A'\subseteq P} Q(A,A')f_I(A') \\
     &= p^{-|I|/2}\sum_{j=0}^{|I\cap A|} {|I\cap A| \choose j}(-p)^{|I|- |I\cap A|+j} (1-p)^jp^{|I\cap A|-j}\\
     &= p^{-|I|/2}(-p)^{|I|} \sum_{j=0}^{|I\cap A|} {|I\cap A| \choose j} (1-p)^j(-1)^{|I\cap A|-j}\\
     &= p^{-|I|/2}(-p)^{|I|} (-p)^{|I\cap A|}\\ 
     &= (-p)^{|I|}f_I(A).
     \end{align*}
 Thus, $f_I$ is an eigenvector of $Q$ with eigenvalue $(-p)^{|I|}$.

Now, fix $I,J\subseteq P$ with $I\neq J$. We have
\begin{align*}
\sum_{ A \subseteq P} (f_I(A))^2 \pi(A)&= p^{-|I|} \sum_{ A \subseteq P}  p^{ \vert I\cap A \vert} \frac{p^{-|A|}}{\left(1+ \frac{1}{p} \right)^n}\\
&= p^{-|I|} \left(1+ \frac{1}{p} \right)^{-n} \sum_{j=0}^{|I|}\sum_{i=0}^{n-|I|}{|I| \choose j}{n-|I| \choose i} p^{2j-j-i}\\
&= p^{-|I|} \left(1+ \frac{1}{p} \right)^{-n} \left(1+ \frac{1}{p} \right)^{n-|I|} (1+p)^{|I|}=1.
\end{align*}
Moreover,  
\begin{align*}
\sum_{ A \subseteq P} f_I(A)f_J(A) \pi(A)&= \left(1+ \frac{1}{p} \right)^{-n}  p^{-\frac{|I|+|J|}{2}} \sum_{ A \subseteq P}  p^{\vert J\cap A \vert}p^{ \vert I\cap A \vert} p^{-|A|}. 
\end{align*}
This last sum can be written as
\begin{align*}
\sum_{k=0}^{|I\cap J|}\sum_{i=0}^{|I|-|I\cap J|}\sum_{j=0}^{|J|-|I\cap J|}\sum_{\ell=0}^{n-|I\cup J|}\textstyle{|I \cap J| \choose k}{|I|- |I \cap J| \choose i}{|J|- |I \cap J| \choose j}{n- |I \cup J| \choose \ell} (-p)^{2k+j+i}p^{-(k+i+j+\ell)},
\end{align*}
and this is zero because either $\sum\limits_{j=0}^{|J|-|I\cap J|}(-1)^j{|J|- |I \cap J| \choose j}$ or $\sum\limits_{i=0}^{|I|-|I\cap J|}(-1)^j{|I|- |I \cap J| \choose i}$ is zero (since $I\neq J$).
\end{proof}

We now proceed to prove the inequalities in \cref{cutoff}. We begin with the upper bound on the total variation distance (which corresponds to an upper bound on the mixing time).

\begin{proof}[Proof of \cref{cutoff}, Part (1)] We will use the standard $\ell_2$ inequality (see \cite[Lemma~12.18]{Levin})
\[4\dTV(Q^t(x,\cdot),\pi)^2 \leq \sum_{\emptyset \neq I \subseteq P} (f_I(x))^2 ((-p)^{|I|})^{2t},\]
which holds for every $x \subseteq P$.  

Using \Cref{orth} and the fact that $\Vert f_I^2\Vert_{\infty} \leq p^{-|I|}$, we get
\begin{align}
4\dTV(Q^t(x,\cdot),\pi)^2 &\leq \sum_{\emptyset \neq I \subseteq P} p^{-|I|} p^{2|I|t} \cr
 & \leq \sum_{j=1}^n {n \choose j} p^{-j}p^{2jt} \cr
 & \leq \label{beg} \sum_{j=1}^n \frac{n^j}{j!} p^{-j}p^{2jt} .  
\end{align}
For $t=\frac{1}{2} \log_{1/p}n + c$,
\[
 \sum_{j=1}^n \frac{n^j}{j!} p^{-j}p^{2jt}\leq \sum_{j=1}^n \frac{p^{(2c-1)j}}{j!} \leq \left( e^{p^{2c-1}}-1\right). \qedhere  
\]
\end{proof}

We now proceed to prove the lower bound on the total variation distance in \cref{cutoff}. 

The state of ${\bf M}_{J(P)}$ at time $t$ is a subset of $P$; let $X_t$ denote the size of this state. Define functions $f,g\colon\mathbb R\to\mathbb R$ by \[f(x)= 1-\frac{(1+p)x}{n}\quad\text{and}\quad g(x)=-\frac{p+1}{n}x^2 +\frac{p+2n-1}{n}x - \frac{n-1}{p+1}.\]
\begin{lemma}\label{expectation}
We have
\[\expect{f(X_{t+1}) \vert X_t }=-p f(X_t) \quad\text{and}\quad\expect{g(X_{t+1}) \vert X_t }=p^2 g(X_t) . \]
\end{lemma}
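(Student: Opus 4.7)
The plan is to exploit the very simple structure of the Markov chain when $P$ is an antichain. Since no element of $P$ is below another, for any $I \subseteq P$ we have $\max(I)=I$ and $\nabla(S)=S$, so a transition from a state $I$ with $|I|=X_t=k$ amounts to selecting a subset $S \subseteq I$ by including each element independently with probability $p$ and then moving to $P\setminus S$. Consequently, conditional on $X_t=k$, we have $X_{t+1}=n-|S|$ where $|S|\sim\text{Binomial}(k,p)$. From the binomial moments this yields
\[
\expect{X_{t+1}\mid X_t}=n-pX_t \quad\text{and}\quad \expect{X_{t+1}^{\,2}\mid X_t}=n^2-2npX_t+p(1-p)X_t+p^2X_t^2.
\]

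For the first identity, since $f$ is linear I would just substitute the conditional mean:
\[
\expect{f(X_{t+1})\mid X_t}=1-\tfrac{(1+p)}{n}(n-pX_t)=-p+\tfrac{p(1+p)}{n}X_t=-p\,f(X_t).
\]
This takes only one line.

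For the second identity, the approach is to plug the formulas for $\expect{X_{t+1}\mid X_t}$ and $\expect{X_{t+1}^{\,2}\mid X_t}$ into the definition of $g$, expand, and check the equality $\expect{g(X_{t+1})\mid X_t}=p^2g(X_t)$ term by term in powers of $X_t$. The quadratic coefficient on each side is visibly $-\frac{p^2(p+1)}{n}$. For the linear coefficient one has to verify
\[
2p(p+1)-\tfrac{p}{n}\bigl[(p+1)(1-p)+(p+2n-1)\bigr]=\tfrac{p^2(p+2n-1)}{n},
\]
which after simplifying the bracket to $2n+p-p^2$ becomes $2p^2-\tfrac{p^2(1-p)}{n}$, matching the right-hand side. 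For the constant term one checks
\[
-(p+1)n+(p+2n-1)-\tfrac{n-1}{p+1}=(1-p)(n-1)-\tfrac{n-1}{p+1}=-\tfrac{p^2(n-1)}{p+1},
\]
the last step using $(1-p)(p+1)-1=-p^2$.

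The only real obstacle is organizational: carrying the three coefficients correctly through the substitution for $g$. There is no conceptual difficulty beyond recognizing that the antichain assumption reduces the dynamics to a single binomial coin-flipping step, which is the key simplification that makes both identities elementary consequences of the first two moments of the binomial distribution.
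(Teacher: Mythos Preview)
Your proof is correct and follows essentially the same approach as the paper: compute $\expect{X_{t+1}\mid X_t}=n-pX_t$ and $\expect{X_{t+1}^2\mid X_t}$ from the binomial structure of the antichain transition, then substitute into the definitions of $f$ and $g$. In fact, you supply the coefficient-by-coefficient verification for $g$ that the paper leaves as ``we can easily check.''
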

\begin{proof}
We have \[\expect{X_{t+1} \vert X_t }= n-X_t +(1-p)X_t= n-pX_{t}. \]
Therefore,
 \[\expect{f(X_{t+1}) \vert X_t }= 1- \frac{(1+p)(n-pX_t)}{n}=-pf(X_t). \]
Now, using the fact that 
\[\expect{X_{t+1}^2 \vert X_t=s }= \sum_{i=0}^s (n-s +i)^2 {s \choose i} p^{s-i}(1-p)^i= p^2s^2-(p^2+2np-p)s+n^2, \]
we can easily check that
\[\expect{g(X_{t+1}) \vert X_t }=p^2 g(X_t) . \qedhere \]
\end{proof}                                                                                        
The main observation that allows us to compute the variance of $f(X_t)$ is the polynomial identity
\begin{equation}\label{poly}
    f^2=- \frac{1+p}{n}g +\frac{1-p}{n}f+\frac{p}{n}.
\end{equation}
The next lemma discusses the variances of $f(X_t)$ and $f(X)$, where $X$ is the size of a set that is distributed according to the stationary measure $\pi$. 
\begin{lemma}\label{variance}
We have
\[\varn(f(X_t)\vert X_0=0)= \frac{p}{n}-\frac{1}{n} p^{2t} +\frac{1-p}{n}(-p)^t  \]
and
\[\varn(f(X))= \frac{p}{n}. \]
\end{lemma}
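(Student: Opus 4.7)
The plan is to combine the two recursive identities from \Cref{expectation} with the polynomial identity \eqref{poly} to reduce the computation of $\varn(f(X_t)\mid X_0 = 0)$ to the computation of the first moments $\E[f(X_t)\mid X_0 = 0]$ and $\E[g(X_t)\mid X_0 = 0]$. Iterating \Cref{expectation} through the tower property immediately gives
\[\E[f(X_t)\mid X_0 = 0] = (-p)^t f(0) = (-p)^t\]
and
\[\E[g(X_t)\mid X_0 = 0] = p^{2t} g(0) = -\frac{n-1}{p+1}\, p^{2t}.\]

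Next I would take expectations of both sides of \eqref{poly} evaluated at $X_t$, conditioning on $X_0 = 0$, to obtain
\[\E[f(X_t)^2 \mid X_0 = 0] = -\frac{1+p}{n}\E[g(X_t)\mid X_0 = 0] + \frac{1-p}{n}\E[f(X_t)\mid X_0 = 0] + \frac{p}{n}.\]
Substituting the two values above collapses the right-hand side to $\frac{n-1}{n} p^{2t} + \frac{1-p}{n}(-p)^t + \frac{p}{n}$. Subtracting $(\E[f(X_t)\mid X_0 = 0])^2 = p^{2t}$ yields the claimed formula for $\varn(f(X_t)\mid X_0 = 0)$.

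For the stationary variance, the strategy is essentially the same but simpler because stationarity lets us bypass the iteration. If $X\sim\pi$, then $\E[f(X)] = \E[\E[f(X_{t+1})\mid X_t]] = -p\,\E[f(X)]$ by \Cref{expectation}, and since $p \neq -1$ this forces $\E[f(X)] = 0$; analogously $\E[g(X)] = 0$. Taking the expectation of \eqref{poly} at a stationary $X$ then gives $\E[f(X)^2] = \frac{p}{n}$, so $\varn(f(X)) = \frac{p}{n}$.

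There is no real obstacle here; the only subtle point is recognizing that \eqref{poly} was tailored precisely so that $f^2$ lies in the linear span of $f$, $g$, and the constants, which are the quantities whose expectations evolve predictably under the chain. The rest is routine arithmetic from the initial values $f(0) = 1$ and $g(0) = -\frac{n-1}{p+1}$.
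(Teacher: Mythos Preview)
Your proof is correct and follows essentially the same approach as the paper: use \eqref{poly} to express $f^2$ in terms of $f$, $g$, and a constant, apply \Cref{expectation} iteratively (via the tower property) to evaluate $\E[f(X_t)\mid X_0=0]$ and $\E[g(X_t)\mid X_0=0]$, and then subtract the square of the mean; the stationary case is handled identically by first deducing $\E[f(X)]=\E[g(X)]=0$ from stationarity. Your write-up is in fact more explicit than the paper's, which merely sketches these steps.
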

\begin{proof}
For the first equation, we write \[\varn(f(X_t)\vert X_0=0)= \expect{f(X_t)^2\vert X_0=0}-\left(\expect{f(X_t)\vert X_0=0}\right)^2\] and use \eqref{poly} and then \Cref{expectation} $t$ times. To compute $\varn(f(X))$, we use the identity \eqref{poly} and the fact that $\expect{f(X)}=\expect{g(X)}=0$, which follows from \cref{expectation} by the following argument.
Let $X_t$ have the same distribution as $X$, and take the expectations of the equations in \Cref{expectation} over $X_t$ to yield
\[ \expect{\expect{f(X_{t+1}|X_t}}=-p\expect{f(X_t)} \quad\text{and}\quad \expect{\expect{g(X_{t+1}|X_t}}=p^2\expect{g(X_t)}. \]
By the law of iterated expectations, the two left-hand sides are $\expect{f(X_{t+1})}$ and $\expect{g(X_{t+1})}$, respectively.
As $\pi$ is stationary, $X_{t+1}$ is also the size of a set that is distributed according to $\pi$, so we have $\expect{f(X_{t+1})}=\expect{f(X_t)}=\expect{f(X)}$ and similarly for $g$.
It follows that $\expect{f(X)}=-p\expect{f(X)}$ and $\expect{g(X)}=p^2\expect{g(X)}$, so $\expect{f(X)}=\expect{g(X)}=0$.
\end{proof}

\begin{proof}[Proof of \Cref{cutoff}, Part (2)]
To make computations easier, we let $h(x)= \sqrt{n} f(x)$. Let $X$ be the size of a random subset of $P$ that is distributed according to $\pi$. We have
\[\expect{h(X)}=0,\]
so \Cref{variance} gives
\[\varn(h(X))= p.\]
Chebychev's inequality implies that
\begin{equation}\label{stationary}
\mathbb P\left(|h(X)|\leq\frac{p^{-c}}{2}\right) \geq 1- 4 p^{2c+1}. 
\end{equation}
Let $t=\frac{1}{2} \log_{1/p}n -c$. \Cref{expectation,variance} give
\[
\expect{h(X_t) \vert X_0=0 }=\sqrt{n}(-p)^tf(0)=\pm p^{-c}
\]
and
\[\varn(h(X_t)\vert X_0=0)= p-p^{2t}+ (1-p)(-p)^t = p \pm \frac{1-p}{\sqrt{n}}p^{-c} -\frac{p^{-2c}}{n}.\]
Suppose $0\leq c\leq \log_{1/p}n$. Another application of Chebychev's inequality and the fact that
\[ \varn(h(X_{t})\vert X_0=0)\leq p + \frac{1-p}{\sqrt{n}}p^{-c} \leq p + (1-p) = 1 \]
yield
\begin{align}
     \mathbb P\left(\vert h(X_t) \vert \leq \frac{p^{-c}}{2}\middle \vert X_0=0\right) \leq \mathbb P\left(\vert  h(X_t) - \expect{h(X_t)}\vert \geq \frac{ p^{-c}}{2} \middle 
\vert X_0=0\right)
& \label{transition} \leq 4 p^{2c}.
\end{align}
Combining \eqref{stationary} and \eqref{transition}, we get that
\[\dTV(Q^t(\emptyset,\cdot),\pi) \geq 1- 4 p^{2c+1}-4 p^{2c}. \qedhere \]
\end{proof}

\section{Future Directions}\label{sec:conclusion}

In \cref{cutoff}, we proved that the rowmotion Markov chains of Boolean lattices exhibit the cutoff phenomenon. It would be very interesting to obtain similar results for other toggle Markov chains. Some particularly interesting toggle Markov chains ${\bf T}(\KK,{\bf x})$ are as follows: 
\begin{itemize}
\item Let $P$ be the set of vertices of a graph $G$, let $\KK$ be the collection of independent sets of $G$, and let ${\bf x}$ be some special ordering of $P$. For example, if $G$ is a cycle graph, then ${\bf x}$ could be the ordering obtained by reading the vertices of $G$ clockwise. 
\item Let $P$ be an $n$-element set, and let ${\bf x}$ be an arbitrary ordering of the elements of $P$. For $0\leq k\leq n$, let $\KK=\{I\subseteq P:|I|\leq k\}$.  
\item Let $P$ be an $n$-element set, and let ${\bf x}$ be an arbitrary ordering of the elements of $P$. For $0\leq k\leq n$, let $\KK=\{I\subseteq P:|I|\geq k\}$.  
\end{itemize}
It would also be interesting to improve our estimates for the mixing times of rowmotion Markov chains for other families of semidistrim (or just distributive) lattices. 

In \Cref{thm:toggle_stationary,thm:hexx}, we computed the stationary distributions of rowmotion Markov chains of distributive lattices and the lattices $\hexx_{a,b}$. It would be quite interesting to find other special families of semidistrim lattices for which one can compute these stationary distributions.

Defant and Williams \cite{Semidistrim} found a close relationship between the rowmotion and \emph{pop-stack sorting} operators of a semidistrim lattice. In \cite{Ungarian}, the first two authors explored \emph{Ungarian Markov chains}, which are defined by introducing randomness into the definition of pop-stack sorting; one can view the Ungarian Markov chain of a semidistrim lattice $L$ as an absorbing analogue of the rowmotion Markov chain of $L$. 

\section*{Acknowledgments}
Colin Defant was supported by the National Science Foundation under Award No.\ 2201907 and by a Benjamin Peirce Fellowship at Harvard University. Evita Nestoridi was supported by the National Science Foundation  grant DMS-2052659. We thank the anonymous referee for helpful advice that greatly improved this article.

\end{document}